\title[Log-Convexity of WAIM of $H^p$ functions]{Log-Convexity of Weighted Area Integral Means of $H^p$ 
    Functions on the Upper Half-plane}
\author{Martin~At.~Stanev}
\email{martin\_stanev@yahoo.com}
\address{Department of Mathematics and Physics, University of Forestry, Sofia, Bulgaria}
\keywords{log-convexity,weighted area integral means,holomorphic function,upper half-plane}
\subjclass[2010]{30H10, 30H20}
\date{2017-08-05} 
\theoremstyle{plain}
\newtheorem{theorem}{Theorem}
\newtheorem{lemma}[theorem]{Lemma}
\theoremstyle{definition}
\newtheorem{definition}[theorem]{Definition}
\newtheorem{example}[theorem]{Example}
\theoremstyle{remark}
\newtheorem{remark}[theorem]{Remark}
\newcommand{\smooth}[2]{{\mathcal D}^{#1}(#2)}
\begin{document}

\begin{abstract} In the present work weighted area integral means \\ $M_{p,\varphi}(f;{\mathrm {Im}}z)$
are studied and it is proved that the function \\ $y\to \log M_{p,\varphi}(f;y)$ is convex in the case when $f$ belongs to a Hardy space on the upper half-plane.\end{abstract}

\maketitle

\section{Introduction}
In the present paper we study three weighted area integral means of holomorphic on the upper half plane functions. They are defined as follows 
\begin{gather*}
M^{(1)}_{p,\varphi}(f;y)=\frac{\int_1^y \varphi'(t) \,\,\int_{-\infty}^{+\infty} |f(x+i t)|^p dx\,\,dt}{\int_1^y \varphi'(t) dt },\\
M^{(0)}_{p,\varphi}(f;y)=\frac{\int_0^y \varphi'(t) \,\,\int_{-\infty}^{+\infty} |f(x+i t)|^p dx\,\,dt}{\int_0^y \varphi'(t) dt },\\
M^{(\infty)}_{p,\varphi}(f;y)=\frac{\int_y^{+\infty} \varphi'(t) \,\,\int_{-\infty}^{+\infty} |f(x+i t)|^p dx\,\,dt}{\int_y^{+\infty} \varphi'(t) dt }
\end{gather*}
where $p>0$, $y>0$ and the functions $f$ and $\varphi$ are such that the integrals exist and the fraction can be defined as a continuous function on $(0;+\infty)$.

The goal is to find specific conditions on the functions $f$ and $\varphi$ under which each one of these three weighted area integral means is log-convex on $(0;+\infty)$.
This goal is partially achieved in theorems~\ref{th:twelve},~\ref{th:thirteen},~\ref{th:fourteen},~\ref{th:fifteen}  where some sufficient conditions are presented. Our theorems show
that in the case when $f$ belongs to the Hardy space $H^p$, $2\le p<+\infty$, these three weighted area integral means are similar to the classical integral
means
\[
M_p^p(f;y)=\int_{-\infty}^{+\infty} |f(x+i y)|^p dx, \quad y\in(0;+\infty)
\]
in terms of its monotonic growth and convexity behavior. 
Moreover, there is a specific weight $\varphi$ and a specific holomorphic function $f$ such that $f$ does not belong to any Hardy space and nevertheless such a similarity still exists.

Some of results in this paper are presented on the Second International Conference
``Mathematics Days in Sofia'', July 10-14, 2017, Sofia, Bulgaria. 

During the period 2011--2016, there is a series of papers by Ch.~Wang, J.~Xiao and K.~Zhu on weighted area integral means. In \cite{XiaoZhu2011} volume integral means
of holomorphic in the unit ball of ${\mathbb C}^n$ functions are studied. Among various results they stated a conjecture about convexity of $\log M_{p,\alpha}(f,r)$ in $\log r$. In \cite{XiaoXu2011} authors study monotonic growth and logarithmic convexity of integral means which are important from a geometric point of view. In~\cite{XiaoZhu2011}, \cite{WangZhu1101}, \cite{WangXiaoZhu1308} authors prove theorems
about convexity of $\log$ of a weighted area integral mean in $\log r$ in the case of holomorphic functions in the unit disk of ${\mathbb {C}}$. They consider the weight function $\varphi$ with $\varphi'(|z|^2)=(1-|z|^2)^{\alpha}$. In \cite{WangXiao1301}, \cite{WangXiao1405},
\cite{WangXiao1600} authors study the case when $f$ is an entire function on ${\mathbb C}$ and the weight function $\varphi$ with $\varphi'(|z|^2)=e^{-\alpha |z|^2}$. 

Note that the case of holomorphic functions on the upper half plane remains unexplored. Thus, the present paper contains theorems about weighted area integral means in a new case. We apply the method demonstrated in~\cite{WangXiaoZhu1308} and modify it with some details that are relevant to our goals.

A great deal of our computations are done and checked with a freeware open-source computer algebra system Maxima (wxMaxima) which is published at \href{http://maxima.sf.net}{http://maxima.sf.net}.

\section{Definitions}
\begin{definition}\label{defABC}
Let $I$, $I\subset (-\infty; +\infty)$, be a non-empty open interval, and $\smooth{n}{I}$ stand for the class of all real valued functions such that have a finite $n\mbox{--th}$ derivative everywhere in $I$. If the functions $q:I\to (0;+\infty)$, $\varphi:I\to (-\infty; +\infty)$ and $M:I\to (0;+\infty)$ are such that $q\in \smooth{2}{I}$, $\varphi \in \smooth{3}{I}$, $M\in \smooth{2}{I}$ then the functions $A$, $B_0$, $C_0$, $B$, $C$, $E_1$, $E_2$, $F_1$ and $F_2$ are defined as follows
\begin{gather*}
A=(q \varphi')' \varphi - q \varphi'^2,\quad B_0=(q \varphi')' \varphi^2, \quad C_0=q \varphi^2 \varphi'^2,\\
B=(q \varphi' M)' \varphi^2, \quad C=q \varphi^2 \varphi'^2 M^2,
\quad E_1=A^2 \varphi + E_2,\\
E_2=A q \varphi \varphi'^2 - B_0' q \varphi \varphi' + (q \varphi')' \varphi q (\varphi^2 \varphi')',\\
F_1=\frac{\,B-\sqrt{B^2-4AC} \,}{2A},\quad F_2=\frac{\,B+\sqrt{B^2-4AC} \,}{2A}
\end{gather*} 
where $'$ denotes differentiation and $F_{1,2}$ are defined on the subset of $I$ defined by the conditions $A\neq 0$, $B^2-4AC\ge 0$.
\end{definition}

Note that if $A\neq 0$, $B^2-4AC\ge 0$ then the functions $F_{1,2}$ are well defined real valued functions such that $A F_{i}^2-B F_{i} +C=0$, $i=1,2$.

\begin{example}\label{exa:weights} The following examples are used in the main theorems
\begin{enumerate}
\item If $I=(0;+\infty)$, $q(x)=1$, $\varphi(x)=\int_1^x t^{-a} dt$, where $x\in I$ and the constant $a>0$ then 
\[
A(x)=-x^{-a-1}(\varphi(x)+1),\quad E_1(x)=ax^{-2a-2} \varphi^2(x)
\]
\item If $I=(0;+\infty)$, $q(x)=1$, $\varphi(x)=\int_1^x e^{-t} dt$, where $x\in I$ then 
\[
A(x)=-e^{-x-1},\quad E_1(x)=e^{-2x-1} \varphi^2(x)
\] 
\item If $I=(0;+\infty)$, $q(x)=1$, $\varphi(x)=\int_0^x t^{-a} dt$, where $x\in I$ and the constant $a<1$ then 
\[
A(x)=(a-1)^{-1}x^{-2a},\quad E_1(x)=0
\] 
\item If $I=(0;+\infty)$, $q(x)=1$, $\varphi(x)=\int_0^x e^{-t} dt$, where $x\in I$ then 
\[
A(x)=-e^{-x},\quad E_1(x)=e^{-2x} \varphi^2(x)
\]  
\item If $I=(0;+\infty)$, $q(x)=1$, $\varphi(x)=-\int_x^{+\infty} t^{-a} dt$, where $x\in I$ and the constant $a>1$ then 
\[
A(x)=(a-1)^{-1}x^{-2a},\quad E_1(x)=0
\] 
\item If $I=(0;+\infty)$, $q(x)=1$, $\varphi(x)=-\int_x^{+\infty} e^{-t} dt$, where $x\in I$ then 
\[
A(x)=0,\quad E_1(x)=0
\] 
\item If $I=(0;+\infty)$, $q(x)=1$, $\varphi(x)=-\int_x^{+\infty} t^a e^{-t} dt$, where $x\in I$ and the constant $a<0$ then 
\[
A(x)>0,\quad E_1(x)>0
\] 
\end{enumerate}
The computations which are needed in (1)--(7) are simple and straight-forward and because of this they are omitted. 

Auxiliary example: $I=(0;+\infty)$, $q(x)=1$, $\varphi(x)=-\int_x^{+\infty} e^{t-e^t} dt$, where $x\in I$,
\[
A=-e^{x-2e^x}<0,\quad E_2=0,\quad E_1=A^2 \varphi+E_2<0
\]  
\end{example}

\section{Auxiliary results}
\begin{lemma}\label{E1E2lemma} Let $I$, $I\subset (-\infty; +\infty)$, be a non-empty open interval. If the functions $q:I\to (0;+\infty)$, $\varphi:I\to (-\infty; +\infty)$ are such that $q\in \smooth{2}{I}$, $\varphi \in \smooth{3}{I}$ and $\varphi'(x)\!\neq\! 0$ for all $x\!\in\! I$, then the following identities hold on $I$
\begin{align}
E_2&=A^2 \varphi - (A B_0 - A q (\varphi^2 \varphi')' + A' q \varphi^2 \varphi')\label{E2identity}\\
E_1&=-q^2 \varphi^2 \varphi'^3 \bigl( \frac{(q \varphi')' \varphi}{q \varphi'^2} \bigr)'\label{E1identity}
\end{align}
\end{lemma}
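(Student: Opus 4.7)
The two identities are purely algebraic relations among $q$, $\varphi$ and their derivatives, so the plan is direct symbolic verification built around the single structural observation
\[
(q\varphi')'\varphi = A + q\varphi'^2,
\]
which is just the definition of $A$ rearranged. From this one immediately derives the useful companion identity $A\varphi = B_0 - q\varphi\varphi'^2$, hence $A^2\varphi = AB_0 - Aq\varphi\varphi'^2$.

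For identity \eqref{E2identity}, the strategy is to substitute the expression $A^2\varphi = AB_0 - Aq\varphi\varphi'^2$ into the claimed right-hand side; the $AB_0$ term cancels and the claim becomes
\[
E_2 = -Aq\varphi\varphi'^2 + Aq(\varphi^2\varphi')' - A'q\varphi^2\varphi'.
\]
Plugging in the definition of $E_2$ and using $(q\varphi')'\varphi - A = q\varphi'^2$ to merge the two $(\varphi^2\varphi')'$ terms, then dividing through by $q\varphi'$ (allowed by the hypothesis $\varphi'\neq 0$), reduces the claim to the vanishing of
\[
2A\varphi\varphi' - B_0'\varphi + q\varphi'(\varphi^2\varphi')' + A'\varphi^2.
\]
One then expands $B_0' = (q\varphi')''\varphi^2 + 2(q\varphi')'\varphi\varphi'$, $A' = (q\varphi')''\varphi + (q\varphi')'\varphi' - (q\varphi'^2)'$, $(q\varphi')' = q'\varphi' + q\varphi''$, $(\varphi^2\varphi')' = 2\varphi\varphi'^2 + \varphi^2\varphi''$, and the resulting monomials in $q$, $q'$, $\varphi$, $\varphi'$, $\varphi''$ cancel pairwise.

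For identity \eqref{E1identity}, combine $E_1 = A^2\varphi + E_2$ with the already-proved \eqref{E2identity} to get
\[
E_1 = 2A^2\varphi - AB_0 + Aq(\varphi^2\varphi')' - A'q\varphi^2\varphi'.
\]
On the other side, set $u = (q\varphi')'\varphi/(q\varphi'^2)$. Using $(q\varphi')'\varphi = A + q\varphi'^2$ rewrite $u = A/(q\varphi'^2) + 1$, so that $u' = (A/(q\varphi'^2))'$. The quotient rule, combined with $(q\varphi'^2)'/\varphi' = q'\varphi' + 2q\varphi''$, yields
\[
-q^2\varphi^2\varphi'^3\,u' = -A'q\varphi^2\varphi' + A\varphi^2(q'\varphi' + 2q\varphi'').
\]
The $A'q\varphi^2\varphi'$ terms on both sides cancel, and the remaining equality reduces to
\[
A\bigl[\,\varphi^2(q'\varphi' + 2q\varphi'') - 2A\varphi + B_0 - q(\varphi^2\varphi')'\,\bigr] = 0,
\]
which is verified by expanding the bracket with the same substitutions as above and observing complete cancellation.

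The proof contains no genuine difficulty beyond bookkeeping; the only ``obstacle'' is choosing substitutions that keep the number of terms manageable. The key simplifying move in both parts is the repeated use of $(q\varphi')'\varphi = A + q\varphi'^2$, which turns all occurrences of second derivatives of $q\varphi'$ into expressions in $A$ and first-order quantities, so that the remaining algebra collapses to an identity in $q$, $q'$, $\varphi$, $\varphi'$, $\varphi''$.
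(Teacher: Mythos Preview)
Your proof is correct and follows essentially the same route as the paper: both arguments are direct symbolic verifications, and for \eqref{E1identity} both pass through the intermediate form $E_1 = 2A^2\varphi - AB_0 + Aq(\varphi^2\varphi')' - A'q\varphi^2\varphi'$ and then recognize this as $-q^2\varphi^2\varphi'^3\bigl(A/(q\varphi'^2)\bigr)'$. The one organizational difference is in \eqref{E2identity}: the paper multiplies the claimed identity by $\varphi$ and exploits the quadratic relation $A\varphi^2 - B_0\varphi + C_0 = 0$ together with its differentiated form $A'\varphi^2 - B_0'\varphi + C_0' = (-2A\varphi + B_0)\varphi'$, whereas you reduce via $(q\varphi')'\varphi = A + q\varphi'^2$ and then expand everything into monomials in $q,q',\varphi,\varphi',\varphi''$; both are valid bookkeeping schemes for the same computation, and neither offers a real advantage over the other.
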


\begin{remark} Note that it follows by this lemma and the definition of $E_2$ that
\begin{equation}\label{rootF2}
\left| 
\begin{array}{r@{\hspace{3pt}}l}
A^2 \varphi +(-E_2)&=A B_0 - A q (\varphi^2 \varphi')' + A' q \varphi^2 \varphi'\\
\varphi (-E_2)&=(-1)(A C_0 - B_0' q \varphi^2 \varphi' + B_0 q (\varphi^2 \varphi')')
\end{array}
\right.
\end{equation}
\end{remark}

\begin{proof}[Proof of Lemma~\ref{E1E2lemma}]
Let the functions $q$ and $\varphi$ meet the conditions from the lemma. Identity~\eqref{E2identity} follows from the computation\footnote{$A' \varphi^2 - B_0' \varphi + C_0'=(-2A \varphi
+ B_0 )\varphi'$ follows from $A \varphi^2 - B_0 \varphi + C_0=0$ on $I$.}
\begin{multline*}
\bigl(A^2 \varphi - \bigl( AB_0- A q (\varphi^2 \varphi')' + A' q \varphi^2 \varphi' \bigr) -E_2 \bigr) \varphi \\
\shoveleft{=A^2 \varphi^2 - \bigl( AB_0- A q (\varphi^2 \varphi')'+ A' q \varphi^2 \varphi' \bigr)\varphi }\\
\shoveright{-\bigl( AC_0-B_0' q \varphi^2 \varphi'+ B_0 q (\varphi^2 \varphi')' \bigr)}\\
\shoveleft{=A(B_0 \varphi- C_0)  -  AB_0\varphi +\underline{ A q\varphi (\varphi^2 \varphi')'} -\underline{ \underline{  A' q \varphi^3 \varphi'}}}\\
\shoveright{- AC_0 +\underline{ \underline{  B_0' q \varphi^2 \varphi' }} - \underline {B_0 q (\varphi^2 \varphi')'}} \\
\shoveleft{=- 2AC_0+  q (\varphi^2 \varphi')' \bigl(A \varphi - B_0  \bigr) 
- q \varphi \varphi' \bigl(A' \varphi^2 - B_0' \varphi  \bigr)} \\
\shoveleft{= - \underline{2AC_0} - q (\varphi^2 \varphi')' q \varphi \varphi'^2 
 - q \varphi \varphi' \bigl( -\underline{2A \varphi \varphi'} + B_0 \varphi' - C_0' \bigr)}\\
= q \varphi \varphi' \bigl( -q \varphi' (\varphi^2 \varphi')' - (  B_0 \varphi' - C_0' )   \bigr)=0. 
\end{multline*}

In order to prove identity~\eqref{E1identity} note that 
by identity~\eqref{E2identity} it follows  that 
$E_1=2A^2 \varphi - AB_0 + A q(\varphi^2 \varphi')' -
A' q \varphi^2 \varphi'$. So,
\begin{multline*}
 E_1=A( 2A \varphi - B_0 + q(\varphi^2 \varphi')' ) - A' q \varphi^2 \varphi' \\
\shoveleft{ =A\bigl( 2(\underline{(q\varphi')'\varphi}-\underline{\underline{q\varphi'^2}}) \varphi- \underline{(q\varphi')'\varphi^2} + q(\underline{\underline{2\varphi \varphi'^2}}+ \varphi^2 \varphi'') \bigr)  - A' q \varphi^2 \varphi'}\\
\shoveleft{ =A\bigl( (q\varphi')'\varphi^2  + q \varphi^2 \varphi'' \bigr)  - A' q \varphi^2 \varphi'=
 A\bigl( (q\varphi')'\varphi'  + q \varphi' \varphi'' \bigr) \frac{\varphi^2}{\varphi'} - A' q \varphi^2 \varphi'}\\
\shoveleft{ =-\frac{\varphi^2}{\varphi'} (  A' q \varphi'^2 - A (q \varphi'^2)' ) = -\frac{\varphi^2}{\varphi'} (q \varphi'^2)^2 \Bigl( \frac{A}{q \varphi'^2} \Bigr)' = -q^2 \varphi^2 \varphi'^3 \Bigl( \frac{A}{q \varphi'^2} \Bigr)'}\\
=-q^2 \varphi^2 \varphi'^3 \Bigl( \frac{(q\varphi')'\varphi- q\varphi'^2}{q \varphi'^2} \Bigr)'=
  -q^2 \varphi^2 \varphi'^3 \Bigl( \frac{\varphi (q \varphi')'}{q \varphi'^2} \Bigr)'.
\end{multline*}
\end{proof}

\begin{lemma}\label{derivativesBC} Let $I$, $I\subset (-\infty; +\infty)$, be a non-empty open interval. Assume the functions $q:I\to (0;+\infty)$, $\varphi:I\to (-\infty; +\infty)$  and $M:I\to (0;+\infty)$ are such that $q\in \smooth{2}{I}$, $\varphi \in \smooth{3}{I}$, $M\in \smooth{2}{I}$.  Then the following identities hold on $I$
\begin{align}
B' q \varphi^2 \varphi' M&= B  (B- B_0 M) + B_0' q \varphi^2 \varphi' M^2 \hphantom{+ (\varphi^2 \varphi')' q M }  \label{derivativeB}\\
& \hphantom{+ (\varphi^2 \varphi')'(B) } + (\varphi^2 \varphi')' q M (B- B_0 M)  + C \varphi^2  \bigl(q \frac{M'}{M}\bigr)' \nonumber\\
C' q \varphi^2 \varphi' M &=  C q  ( \varphi^2 \varphi')' \, M + 2 B C - B_0 C M \label{derivativeC}
\end{align}
\end{lemma}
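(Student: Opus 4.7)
The plan is to prove both identities by direct expansion, with one key algebraic observation used throughout. Applying the product rule to $B=(q\varphi' M)'\varphi^2$ gives
\[
B = B_0 M + q\varphi^2\varphi' M', \qquad\text{equivalently}\qquad B - B_0 M = q\varphi^2\varphi' M'.
\]
This single identity is what makes the right-hand sides of \eqref{derivativeB} and \eqref{derivativeC} manageable. I would also make constant use of the two routine expansions $(\varphi^2\varphi')' = 2\varphi\varphi'^2 + \varphi^2\varphi''$ and $B_0 = q'\varphi^2\varphi' + q\varphi^2\varphi''$ (from $B_0 = (q\varphi')'\varphi^2$).

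For identity \eqref{derivativeC}, I would first compute
\[
C' = q'\varphi^2\varphi'^2 M^2 + 2q\varphi\varphi'^3 M^2 + 2q\varphi^2\varphi'\varphi'' M^2 + 2q\varphi^2\varphi'^2 MM'
\]
directly from $C=q\varphi^2\varphi'^2 M^2$, and then multiply through by $q\varphi^2\varphi' M$. On the right-hand side, I would use the expansion of $(\varphi^2\varphi')'$ and of $B_0$ to open up $Cq(\varphi^2\varphi')'M$ and $B_0 C M$, and rewrite $2BC=2B_0 CM + 2Cq\varphi^2\varphi' M'$ by means of the key identity above. A term-by-term matching of the two sides then closes the argument.

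For identity \eqref{derivativeB} I would differentiate the decomposition $B=B_0 M + q\varphi^2\varphi' M'$ to obtain
\[
B' = B_0' M + B_0 M' + (q\varphi^2\varphi')' M' + q\varphi^2\varphi' M'',
\]
and multiply by $q\varphi^2\varphi' M$. On the right-hand side, I would expand
\[
B(B-B_0 M) = B\cdot q\varphi^2\varphi' M' = B_0 M\, q\varphi^2\varphi' M' + q^2\varphi^4\varphi'^2 M'^2
\]
and
\[
C\varphi^2\Bigl(q\tfrac{M'}{M}\Bigr)' = qq'\varphi^4\varphi'^2 MM' + q^2\varphi^4\varphi'^2 MM'' - q^2\varphi^4\varphi'^2 M'^2.
\]
The two $q^2\varphi^4\varphi'^2 M'^2$ contributions cancel, and the remaining coefficients of $MM'$ are reconciled using $(q\varphi^2\varphi')' = q'\varphi^2\varphi' + q(\varphi^2\varphi')'$, which occurs naturally on the left-hand side after multiplication.

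The main obstacle is purely bookkeeping: each side of each identity carries roughly a dozen monomials in the primed quantities, and the work consists of tracking the factors of $M, M', M'', q, q', \varphi, \varphi', \varphi''$ and checking the cancellations. No deeper conceptual step is required once the decomposition $B - B_0 M = q\varphi^2\varphi' M'$ is in hand, which is why I would lead the proof with that observation and then simply grind through the two expansions.
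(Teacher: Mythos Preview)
Your proposal is correct and follows essentially the same strategy as the paper: both proofs hinge on the decomposition $B - B_0 M = q\varphi^2\varphi' M'$ and then carry out a direct product-rule computation. The only difference is presentational---the paper transforms the left-hand side into the right-hand side by a chain of equalities kept in terms of $B$, $B_0$, $C$, while you expand both sides into monomials and match---but the underlying algebra is identical.
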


\begin{proof}[Proof of Lemma~\ref{derivativesBC}] The proof of  identity~\eqref{derivativeB} is as follows
\begin{multline*}
B' q \varphi^2 \varphi' M = \bigl( M (B_0+  q \varphi^2 \varphi'\, \frac{M'}{M}) \bigr)' q \varphi^2 \varphi'M=\\
\shoveleft{=M' (B_0+  q \varphi^2 \varphi' \frac{M'}{M}) q \varphi^2 \varphi' M  +
M (B_0'+   (\varphi^2 \varphi' \, q \frac{M'}{M})') q \varphi^2 \varphi' M}\\
\shoveleft{= M' B q \varphi^2 \varphi' + B_0' q \varphi^2 \varphi' M^2 }\\
\shoveright{ +(\varphi^2 \varphi')' \, q \frac{M'}{M} q \varphi^2 \varphi' M^2 + \varphi^2 \varphi' \, (q \frac{M'}{M})' q \varphi^2 \varphi' M^2 }\\
\shoveleft{= B  q \varphi^2 \varphi' M' + B_0' q \varphi^2 \varphi' M^2 }\\
\shoveright{  + (\varphi^2 \varphi')' \,q \varphi^2 \varphi' q M M' +  q \varphi^2 \varphi' \varphi^2 \varphi' M^2 \, (q \frac{M'}{M})' }\\
\shoveleft{= B  (B- B_0 M) + B_0' q \varphi^2 \varphi' M^2}\\
  + (\varphi^2 \varphi')' q M (B- B_0 M)  + C \varphi^2  (q \frac{M'}{M})'. 
\end{multline*}

The proof of identity~\eqref{derivativeC} is as follows
\begin{align*}
&C' q \varphi^2 \varphi' M =(q \varphi' \, \varphi' \varphi^2 \, M^2)' q \varphi^2 \varphi' M \\
&=\bigl((q \varphi')' \,  \varphi^2 \varphi' \, M^2 + 
   q \varphi' \, ( \varphi^2 \varphi')' \, M^2 + 
   q \varphi' \, \varphi' \varphi^2 \, 2 M M'\bigr)\,  q \varphi^2 \varphi' M  \\
&= B_0 C M  +   q  ( \varphi^2 \varphi')' \, C M + 2 q \varphi^2 \varphi'^2 M^2\, q \varphi^2 \varphi' M' \\
&=B_0 C M  +   q  ( \varphi^2 \varphi')' \, C M + 2 C (B- B_0 M)\\
&= C q  ( \varphi^2 \varphi')' \, M + 2 B C - B_0 C M.
\end{align*}
\end{proof}

\begin{lemma}\label{derivativehF} Let $I$, $I\subset (-\infty; +\infty)$, be a non-empty open interval. Assume the functions $q:I\to (0;+\infty)$, $\varphi:I\to (-\infty; +\infty)$, $M:I\to (0;+\infty)$ and $h:I\to (-\infty; +\infty)$ meet the conditions 
\begin{enumerate}\renewcommand{\labelenumi}{({\itshape \roman{enumi}}\,)~}
\item $q\in \smooth{2}{I}$, $\varphi \in \smooth{3}{I}$, $M\in \smooth{2}{I}$,
\item there exists a non empty open subinterval $J$ of $I$, $J\subseteq I$, such that $A(x)\neq 0$ for all $x\in J$ and $B^2-4AC\ge 0$ on $J$,
\item $h'=\varphi' M$ on $I$
\end{enumerate}
Then the following identities hold on $J$
\begin{align}
&(h-F_1)' A(F_2-F_1) q \varphi^2 \varphi' M \label{derivativehF1}\\
&\hphantom{(h-F_1} =F_1 \Bigl((F_1-\varphi M)(A^2 (F_1-\varphi M) + E_1 M)+C \varphi^2 \bigl( q\frac{M'}{M}\bigr)' \Bigr) \nonumber \\
&(h-F_2)' A(F_1-F_2) q \varphi^2 \varphi' M \label{derivativehF2}\\
&\hphantom{(h-F_2} = F_2 \Bigl((F_2-\varphi M)(A^2 (F_2-\varphi M) + E_1 M)+C \varphi^2 \bigl( q\frac{M'}{M}\bigr)' \Bigr) \nonumber
\end{align}
\end{lemma}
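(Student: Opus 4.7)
My plan is to exploit the fact that $F_1$ satisfies $AF_1^2-BF_1+C=0$ identically on $J$, together with the relation $2AF_1-B=-A(F_2-F_1)$, which follows from $F_1+F_2=B/A$. Differentiating the quadratic identity for $F_1$ yields
\[
A(F_2-F_1)F_1'=A'F_1^2-B'F_1+C'
\]
on $J$. Since $h'=\varphi'M$ and $\varphi'M\cdot q\varphi^2\varphi'M=C$, multiplying $(h-F_1)'=\varphi'M-F_1'$ through by $A(F_2-F_1)q\varphi^2\varphi'M$ produces
\[
(h-F_1)'A(F_2-F_1)q\varphi^2\varphi'M=AC(F_2-F_1)+F_1\,B'q\varphi^2\varphi'M-F_1^2\,A'q\varphi^2\varphi'M-C'q\varphi^2\varphi'M.
\]
The identities \eqref{derivativehF1} and \eqref{derivativehF2} are symmetric under the swap $1\leftrightarrow 2$, so I would write out only the argument for \eqref{derivativehF1} and leave \eqref{derivativehF2} to follow by the same reasoning.

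The next step is to apply Lemma~\ref{derivativesBC} to replace $B'q\varphi^2\varphi'M$ and $C'q\varphi^2\varphi'M$ by expressions built from $B$, $B_0$, $C$, $q(\varphi^2\varphi')'$, $B_0'q\varphi^2\varphi'M^2$, and $C\varphi^2(qM'/M)'$. After this substitution the term $F_1C\varphi^2(qM'/M)'$ appears on the left and matches the corresponding term on the right-hand side of \eqref{derivativehF1}. What remains to prove is an identity whose only differentiated quantities on the left are $A'q\varphi^2\varphi'$ and $B_0'q\varphi^2\varphi'$. I would eliminate these using the two equalities in \eqref{rootF2}, solved for the derivatives as
\begin{gather*}
A'q\varphi^2\varphi'=A^2\varphi-E_2-AB_0+Aq(\varphi^2\varphi')',\\
B_0'q\varphi^2\varphi'=-\varphi E_2+AC_0+B_0q(\varphi^2\varphi')'.
\end{gather*}
Combined with the definition $E_1=A^2\varphi+E_2$, these are precisely what allow the factor $E_1$ to materialise on the right-hand side of \eqref{derivativehF1}.

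The final step is purely algebraic. Using the defining quadratic identity $AF_1^2=BF_1-C$ repeatedly to lower powers of $F_1$, the auxiliary relation $AF_2-B=-AF_1$, and the elementary equality $C=C_0M^2$ (immediate from the definitions of $C$ and $C_0$), I would match monomials against the target expression $F_1A^2(F_1-\varphi M)^2+F_1E_1M(F_1-\varphi M)+F_1C\varphi^2(qM'/M)'$. I expect no conceptual obstacle; the main difficulty is the sheer bookkeeping, since roughly a dozen monomials on each side must be reconciled and the substitution $AF_1^2=BF_1-C$ applied more than once. Since the paper already states that much of the computation is carried out in a computer algebra system, I would likewise confirm this final collapse symbolically to guard against sign errors.
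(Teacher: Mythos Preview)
Your proposal is correct and follows essentially the same route as the paper: differentiate the quadratic identity $AF_1^2-BF_1+C=0$ to convert $F_1'A(F_2-F_1)$ into $A'F_1^2-B'F_1+C'$, substitute the expressions for $B'q\varphi^2\varphi'M$ and $C'q\varphi^2\varphi'M$ from Lemma~\ref{derivativesBC}, then use the identities in~\eqref{rootF2} together with Vieta-type relations for $F_1,F_2$ to collapse the remaining terms into the $E_1$-form. The only cosmetic difference is that the paper groups the terms so that the combinations appearing in~\eqref{rootF2} are recognised as whole blocks, whereas you solve~\eqref{rootF2} for $A'q\varphi^2\varphi'$ and $B_0'q\varphi^2\varphi'$ and substitute; both lead to the same cancellation.
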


\begin{proof}[Proof of Lemma~\ref{derivativehF}] Each of these identities is a result of direct simple and rather long computations.  

The proof of identity~\eqref{derivativehF1} is as follows.
\begin{align}
&(h-F_1)' A(F_2-F_1) q \varphi^2 \varphi' M = (h'-F_1') A(F_2-F_1) q \varphi^2 \varphi' M \nonumber\\
&= \varphi'M A(F_2-F_1) q \varphi^2 \varphi' M  -F_1' A(F_2-F_1) q \varphi^2 \varphi' M.\label{derivativehF:eq1}
\end{align}
By the definition of the functions $F_{1,2}$ it follows that
\begin{align*}
A(F_2-F_1)&=\sqrt{B^2-4 A C}=-2 A F_1+B,\\
A F_1^2 - B F_1 +C&=0 \implies A' F_1^2 - B' F_1 +C'= F_1'(-2A F_1 + B) 
\end{align*}
and hence $F_1' A (F_2-F_1) = A' F_1^2 - B' F_1 +C'$. 

So, from~\eqref{derivativehF:eq1} it follows that
\begin{multline*}
(h-F_1)' A(F_2-F_1) q \varphi^2 \varphi' M
= A(F_2-F_1) C  \\
\shoveright{- (A' F_1^2-B' F_1+C') q \varphi^2 \varphi' M }\\
\shoveleft{=A(F_2-F_1) (-A F_1^2 +B F_1) - A' F_1^2 q \varphi^2 \varphi' M}\\
 + B' F_1 q \varphi^2 \varphi' M - C' q \varphi^2 \varphi' M
\end{multline*}
Now, identities~\eqref{derivativeB} and~\eqref{derivativeC} from Lemma~\ref{derivativesBC} allow us to obtain
\begin{multline*}
(h-F_1)' A(F_2-F_1) q \varphi^2 \varphi' M\\
\shoveleft{=-A^2 F_1^2 F_2 + A B F_1 F_2 + A^2 F_1^3 - A B F_1^2  - A' F_1^2 q \varphi^2 \varphi' M}\\
\shoveright{+B F_1  (B- B_0 M) + B_0' q \varphi^2 \varphi' M^2 F_1
 + (\varphi^2 \varphi')' q M (B- B_0 M) F_1 }\\
\shoveright{ + C \varphi^2  \bigl(q \frac{M'}{M}\bigr)' F_1 - C q  ( \varphi^2 \varphi')' \, M -2 B C + B_0 C M}\\
\shoveleft{=-A C F_1 + \underline{B C} + A^2 F_1^3 -\underline{ A B F_1^2} -  A' F_1^2 q \varphi^2 \varphi' M}\\
\shoveright{+ \underline{(A F_1^2+C)(B- B_0 M)} + B_0' q \varphi^2 \varphi' M^2 F_1 
  + \underline{(\varphi^2 \varphi')' q M B  F_1}} \\
\shoveright{- (\varphi^2 \varphi')' q B_0 M^2  F_1 + C \varphi^2  \bigl(q \frac{M'}{M}\bigr)' F_1 
- (- A F_1^2 +\underline{ B F_1}) q  ( \varphi^2 \varphi')' \, M}\\
 -\underline{2 B C} + \underline{B_0 C M}
\end{multline*}
where all the underlined parts cancel out. Thus
\begin{multline*}
(h-F_1)' A(F_2-F_1) q \varphi^2 \varphi' M\\
\shoveleft{=-A C F_1 + A^2 F_1^3 -  A' F_1^2 q \varphi^2 \varphi' M + B_0' q \varphi^2 \varphi' M^2 F_1 }\\
\shoveright{- (\varphi^2 \varphi')' q B_0 M^2  F_1 + C \varphi^2  \bigl(q \frac{M'}{M}\bigr)' F_1 
+ A F_1^2  q  ( \varphi^2 \varphi')' \, M}\\
\shoveleft{=F_1 \bigl( A^2 F_1^2 - (A B_0 - A q (\varphi^2 \varphi')' + A' q \varphi^2 \varphi' ) F_1 M }\\
- (A C_0 - B_0' q \varphi^2 \varphi' + (\varphi^2 \varphi')' q B_0 ) M^2  + C \varphi^2  \bigl(q \frac{M'}{M}\bigr)' \bigr)
\end{multline*}
Finally, by identities~\eqref{rootF2} we obtain
\begin{multline*}
(h-F_1)' A(F_2-F_1) q \varphi^2 \varphi' M\\
 =F_1 \Bigl((F_1-\varphi M)(A^2 F_1+ E_2 M)+C \varphi^2 \bigl( q\frac{M'}{M}\bigr)' \Bigr) \\
 =F_1 \Bigl((F_1-\varphi M)(A^2 (F_1-\varphi M) + E_1 M)+C \varphi^2 \bigl( q\frac{M'}{M}\bigr)' \Bigr) 
\end{multline*}

The computations that prove identity~\eqref{derivativehF2} are omitted as they are similar to those that prove identity~\eqref{derivativehF1}.
\end{proof}

\begin{lemma}\label{derivativehBC} Let $I$, $I\subset (-\infty; +\infty)$, be a non-empty open interval. Assume the functions $q:I\to (0;+\infty)$, $\varphi:I\to (-\infty; +\infty)$, $M:I\to (0;+\infty)$ and $h:I\to (-\infty; +\infty)$ meet the conditions 
\begin{enumerate}\renewcommand{\labelenumi}{({\itshape \roman{enumi}}\,)~}
\item $q\in \smooth{2}{I}$, $\varphi \in \smooth{3}{I}$, $M\in \smooth{2}{I}$,
\item there exists a non empty open subinterval $J$ of $I$, $J\subseteq I$, such that $A=0$ and $\varphi'>0$  on $J$ and $B(x)\neq 0$ for all $x\in J$, 
\item $h'=\varphi' M$ on $I$
\end{enumerate}
Then the following identity holds on $J$
\[
\bigl(h-\frac{C}{B}\bigr)'\, B^2 q \varphi' M = C^2 \bigl( q\frac{M'}{M}\bigr)'
\]
\end{lemma}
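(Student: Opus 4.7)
The plan is to adapt the strategy of Lemma~\ref{derivativehF} to the degenerate case $A=0$: when $A$ vanishes identically on $J$, the quadratic $AF^{2}-BF+C$ collapses to the linear expression $-BF+C$, whose unique root is $C/B$; this function now plays the role previously taken by $F_{1}$ and $F_{2}$ simultaneously. I would begin from $h'=\varphi' M$ together with the quotient rule for $C/B$, which gives
\[
(h-C/B)'\,B^{2}q\varphi' M=B^{2}q\varphi'^{2}M^{2}-B\,(C'q\varphi' M)+C\,(B'q\varphi' M).
\]

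The next step is to substitute identities~\eqref{derivativeB} and~\eqref{derivativeC} of Lemma~\ref{derivativesBC}, after dividing each by $\varphi^{2}$ to convert them into expressions for $B'q\varphi' M$ and $C'q\varphi' M$. A careful bookkeeping then shows that the cross-terms $-BCq(\varphi^{2}\varphi')'M$ and $+(\varphi^{2}\varphi')'qMBC$ cancel, the pair $\pm BB_{0}CM$ cancels, and the three $B^{2}C/\varphi^{2}$-contributions (one coming from the identity $B^{2}q\varphi'^{2}M^{2}=B^{2}C/\varphi^{2}$, which follows from $C=q\varphi^{2}\varphi'^{2}M^{2}$) add to zero. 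What remains is
\[
(h-C/B)'\,B^{2}q\varphi' M=C^{2}\bigl(q\tfrac{M'}{M}\bigr)'+\frac{M^{2}C}{\varphi^{2}}\bigl[B_{0}'\,q\varphi^{2}\varphi'-(\varphi^{2}\varphi')'\,qB_{0}\bigr].
\]

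The concluding step is to show that the bracket vanishes on $J$. Since $A=0$, the relation $(q\varphi')'\varphi=q\varphi'^{2}$ holds, so $B_{0}=(q\varphi')'\varphi^{2}=q\varphi\varphi'^{2}$. Expanding $B_{0}'$ and $(\varphi^{2}\varphi')'$ and simplifying yields
\[
B_{0}'\,q\varphi^{2}\varphi'-(\varphi^{2}\varphi')'\,qB_{0}=q\varphi^{2}\varphi'^{2}\bigl(q'\varphi\varphi'+q\varphi\varphi''-q\varphi'^{2}\bigr)=q\varphi^{2}\varphi'^{2}\,A,
\]
where the final equality uses the direct expansion $A=(q\varphi')'\varphi-q\varphi'^{2}=q'\varphi\varphi'+q\varphi\varphi''-q\varphi'^{2}$. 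On $J$ this is zero, so the offending summand drops out and the required identity follows.

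The main obstacle I expect is the bookkeeping in the middle step: tracking which combinations among the many $BB_{0}M$, $(\varphi^{2}\varphi')'$ and $B^{2}$ contributions cancel and which survive, since each individual simplification is elementary but it is easy to lose a term. The conceptual content sits in the last step, where the algebraic observation $q'\varphi\varphi'+q\varphi\varphi''-q\varphi'^{2}=A$ makes transparent why the hypothesis $A=0$ is precisely the correct condition to force the extra term to vanish; the hypotheses $\varphi'>0$ and $B\neq 0$ are used implicitly to ensure $B_{0}$ has the stated form and that $C/B$ is well defined and differentiable on $J$.
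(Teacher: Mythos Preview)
Your argument is correct and follows essentially the same path as the paper's proof: expand $(h-C/B)'$, feed in the identities of Lemma~\ref{derivativesBC}, cancel the cross-terms, and show that the surviving non-$(qM'/M)'$ contribution vanishes because $A=0$. The only cosmetic differences are that the paper multiplies through by $Bq\varphi^{2}\varphi' M$ rather than $B^{2}q\varphi' M$ and packages the residual as $qM^{2}(\varphi^{2}\varphi')^{2}\bigl((q\varphi')'/\varphi'\bigr)'$, invoking a preliminary claim that $\bigl((q\varphi')'/\varphi'\bigr)'=A/\varphi^{2}=0$, whereas you expand the bracket directly to $q\varphi^{2}\varphi'^{2}A$; note also that your division by $\varphi^{2}$ tacitly uses $\varphi\neq 0$ on $J$, which the paper establishes explicitly (it follows from $A=0$ together with $q>0$, $\varphi'>0$).
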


\begin{proof}[Proof of Lemma~\ref{derivativehBC}] \ \\
{\itshape {Claim.}\/} $(\frac{(q \varphi')'}{\varphi'})'=0$ on $J$ and $\varphi(x)\neq 0$ for all $x\in J$.

Indeed, note that $A=0$ implies $\varphi'' \varphi = \varphi'^2>0$.
So, $\varphi(x)\neq 0$ for all $x\in J$. Therefore, from $A=0$ follows
that $\frac{(q \varphi')'}{\varphi'}=\frac{q \varphi'}{\varphi}$ and
\[
\bigl(\frac{(q \varphi')'}{\varphi'}\bigr)'=\bigl(\frac{q \varphi'}{\varphi}\bigr)'=\frac{A}{\varphi^2}=0.
\]
Thus, the claim is proved.

Now, the lemma follows from the following computations
\begin{align*}
&\bigl( h - \frac{C}{B} \bigr)' B q \varphi^2 \varphi' M =
\bigl( h' - \frac{C'B - C B'}{B^2} \bigr)  B q \varphi^2 \varphi' M\\
&=\varphi'M B q \varphi^2 \varphi' M + B' \frac{C}{B} q \varphi^2 \varphi' M 
- C' q \varphi^2 \varphi' M
\end{align*}
$B'$ and $C'$ are substituted accordingly to identities~\eqref{derivativeB} and~\eqref{derivativeC} from Lemma~\ref{derivativesBC}
\begin{multline*}
\bigl( h - \frac{C}{B} \bigr)' B q \varphi^2 \varphi' M \\
\shoveleft{= \underline{B C} + \frac{C}{B} \Bigl( \underline{B  (B- B_0 M)} + B_0' q \varphi^2 \varphi' M^2 + (\varphi^2 \varphi')' q M (\underline{B}- B_0 M)}\\
   + C \varphi^2  \bigl(q \frac{M'}{M}\bigr)' \Bigr) -\underline{ (C q  ( \varphi^2 \varphi')' \, M + 2 B C - B_0 C M)}
\end{multline*}
where all the underlined parts cancel out. Thus,
\begin{align*}
&\bigl( h - \frac{C}{B} \bigr)' B q \varphi^2 \varphi' M \\
&= \frac{C}{B} \Bigl(  B_0' q \varphi^2 \varphi' M^2 - (\varphi^2 \varphi')' q M^2  B_0  + C \varphi^2  \bigl(q \frac{M'}{M}\bigr)' \Bigr)\\
&=\frac{C}{B} \Bigl(  q M^2 (\varphi^2 \varphi')^2 \bigl( \frac{B_0}{\varphi^2 \varphi'} \bigr)' + C \varphi^2  \bigl(q \frac{M'}{M}\bigr)' \Bigr)\\
&=\frac{C}{B} \Bigl(  q M^2 (\varphi^2 \varphi')^2 \bigl( \frac{(q \varphi')'}{\varphi'} \bigr)' + C \varphi^2  \bigl(q \frac{M'}{M}\bigr)' \Bigr)
\end{align*}
Finally, accordingly to the claim, it follows that
\begin{align*}
&\bigl( h - \frac{C}{B} \bigr)' B q \varphi^2 \varphi' M = \frac{C^2}{B} \varphi^2  \bigl(q \frac{M'}{M}\bigr)'.
\end{align*}
\end{proof}

\section{Main results}
\begin{theorem}\label{th1}
Assume the functions $\varphi:(0;+\infty)\to (-\infty; +\infty)$, $M:(0;+\infty)\to (0;+\infty)$ and $h:(0;+\infty)\to (-\infty; +\infty)$ are such that $\varphi \in \smooth{3}{0;+\infty}$, $M\in \smooth{2}{0;+\infty}$ and meet the conditions 
\begin{enumerate}\renewcommand{\labelenumi}{({\itshape \roman{enumi}}\,)~}
\item $M'<0$ and $(\log M)''\ge 0$ on $(0;+\infty)$ 
\item $\varphi'>0$ on $(0;+\infty)$ and $\varphi(x)=\int_1^x \varphi'(t) dt$
for all $x\in (0;+\infty)$
\item $\varphi'^2\varphi''+\varphi \varphi' \varphi''' - 2 \varphi \varphi''^2 \le 0$ on $(0;+\infty)$, 
\item $h(x)=\int_1^x \varphi'(t) M(t) dt$ for all $x\in(0;+\infty)$
\end{enumerate}
Then $\frac{h}{\varphi}$ and  $\log \frac{h}{\varphi}$ both belong to $\smooth{2}{0;+\infty}$ and moreover
\[
\bigl(\frac{h}{\varphi} \bigr)'<0, \bigl(\log \frac{h}{\varphi} \bigr)''>0 \mbox{ on } (0;+\infty)
\]
\end{theorem}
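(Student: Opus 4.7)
The plan is, after establishing regularity, to verify each of the two pointwise inequalities separately, reducing them to sign statements that can be attacked with the auxiliary lemmas of Section~3.

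Regularity and positivity: Both $h$ and $\varphi$ have simple zeros at $x=1$ (from (ii), (iv) together with $\varphi'(1)\neq 0$), and to first order near $x=1$ one has $h(x)/\varphi(x)\to M(1)>0$; a Taylor expansion to second order in $(x-1)$ gives a $C^2$ extension of $h/\varphi$ across $x=1$. Combined with the sign analysis below for $v$, this gives $h/\varphi>0$ throughout $(0;+\infty)$, so $\log (h/\varphi)\in\smooth{2}{0;+\infty}$ as well.

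First inequality $(h/\varphi)'<0$. Write $(h/\varphi)'=\varphi'(M\varphi-h)/\varphi^2$; since $\varphi'>0$ and $\varphi^2>0$ off $x=1$, it suffices to show $v:=M\varphi-h<0$ on $(0;+\infty)\setminus\{1\}$. From $v(1)=0$ and $v'=M'\varphi$ one obtains
\[
v(x)=\int_1^x M'(t)\varphi(t)\,dt=\int_1^x\varphi'(t)\bigl(M(x)-M(t)\bigr)\,dt,
\]
and either representation yields $v<0$ on both $(0;1)$ and $(1;+\infty)$ thanks to $M'<0$ together with the signs of $\varphi,\varphi'$ on each side of $1$. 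At $x=1$ a second-order Taylor expansion of $v$ and of $\varphi^2$ (both vanishing to order $2$) produces the limiting value $(h/\varphi)'(1)=M'(1)/2<0$.

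Second inequality $(\log(h/\varphi))''>0$. The key algebraic identity
\[
(\log(h/\varphi))''=-\frac{Ah^2-Bh+C}{h^2\varphi^2},
\]
with $A,B,C$ from Definition~\ref{defABC} taken with $q\equiv 1$ (so $A=\varphi''\varphi-\varphi'^2$, $B=h''\varphi^2$, $C=\varphi^2\varphi'^2M^2$), is a routine computation using $h'=\varphi'M$. This reduces the problem to proving $\Phi:=Ah^2-Bh+C<0$ on $(0;+\infty)\setminus\{1\}$, the value at $x=1$ then following by continuity. By Lemma~\ref{E1E2lemma} applied with $q=1$, condition~(iii) is exactly $E_1\ge 0$; combined with $(\log M)''\ge 0$ from~(i) and the obvious $C\ge 0$, the right-hand side of the derivative identity in Lemma~\ref{derivativehF} becomes sign-controlled. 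Writing $\Phi=A(h-F_1)(h-F_2)$ on any subinterval where $A\neq 0$, and using the boundary condition that $h-F_1$ and $h-F_2$ both tend to $0$ as $x\to 1$ (all three quantities extend continuously to $0$ there), the integrated sign information from Lemma~\ref{derivativehF} places $h$ on the correct side of each $F_i$ and forces $A(h-F_1)(h-F_2)<0$. The complementary case $A=0$ is handled in the same fashion via Lemma~\ref{derivativehBC}, where the single relevant ``root'' is $C/B$.

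The main obstacle I anticipate is the bookkeeping of signs at and near $x=1$: $A$, $B$, $C$, $F_1$, $F_2$, $h$, $\varphi$ all vanish there to specific orders in $(x-1)$, so one must Taylor-expand far enough to decide, on each side of $1$, on which side of $F_1$ (respectively $F_2$) the function $h$ lies immediately after (respectively before) $x=1$. A secondary difficulty is the case split on the sign of $A$: condition~(iii) is equivalent to $A/\varphi'^2$ being non-increasing and forces $A<0$ throughout $(1;+\infty)$, but on $(0;1)$ all three possibilities $A>0$, $A=0$, $A<0$ can occur, so the sign information obtained from Lemmas~\ref{derivativehF} and~\ref{derivativehBC} on the corresponding subintervals must be patched together.
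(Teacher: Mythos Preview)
Your plan is correct and matches the paper's proof almost step for step: regularity of $h/\varphi$ at $x=1$ via third-order Taylor expansion, the first inequality from the sign of $h-\varphi M$, and the second from $Ah^2-Bh+C<0$ established by tracking $h-F_2$ on $(0;1)$ and $h-F_1$ on $(1;+\infty)$ via Lemma~\ref{derivativehF}, with the case split on the sign of $A$ on $(0;1)$. The only notable difference is that the paper handles the transition point $A(x_A)=0$ by the continuity of $F_2=2C/(B-\sqrt{B^2-4AC})$ rather than invoking Lemma~\ref{derivativehBC}, and it carries out explicitly the fourth-order Taylor computation showing $\lim_{x\to 1}(-(Ah^2-Bh+C))/(\varphi^2h^2)>0$, which you will need for the strict inequality at $x=1$.
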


\begin{proof}[Proof of Theorem~\ref{th1}] \ \\
{\itshape {Claim.}\/} $\tfrac{h}{\varphi}\in \smooth{2}{0;+\infty}$.

Indeed, accordingly to the assumptions, it is clear that the functions $h$ and $\varphi$ belong to $\smooth{3}{0;+\infty}$. Moreover,  $\varphi(x)=0$ $\iff$ $x=1$. So, it is sufficient to prove that $\tfrac{h}{\varphi}$ has asymptotic expansion of the form
\[
\tfrac{h(x)}{\varphi(x)}=\alpha_0+\alpha_1 (x-1) + \alpha_2 (x-1)^2 +o(x-1)^2,
\]
as $x\to 1$, where $\alpha_{0,1,2}$ are real numbers that do not depend on $x$.

The expansion is obtained as follows. Note that $h(1)=\varphi(1)=0$ and
\begin{align}
h(x)&=h'(1)(x\!-\!1)+\tfrac12 h''(1)(x\!-\!1)^2 +\tfrac16 h'''(1)(x\!-\!1)^3 \!+\! o(x\!-\!1)^3 \label{th1:h}\\
\varphi(x)&=\varphi'(1)(x\!-\!1)+\tfrac12 \varphi''(1)(x\!-\!1)^2 + \tfrac16 \varphi'''(1)(x\!-\!1)^3\!+\! o(x\!-\!1)^3\label{th1:f}
\end{align}
as $x\to 1$ with $\varphi'(1)\neq 0$ (by the assumptions of the theorem). Therefore, 
\[
\tfrac{h(x)}{\varphi(x)}=\beta_0+\beta_1 (x-1) + \beta_2 (x-1)^2 + o(x-1)^2
\]
where $\beta_0=\tfrac{h'(1)}{\varphi'(1)}$, $\beta_1=\tfrac12 (h''(1)-\varphi''(1)\beta_0)\tfrac{1}{\varphi'(1)}$ and
\[
\beta_2=\tfrac16 (h'''(1)-3 \beta_1 \varphi''(1)- \beta_0 \varphi'''(1)) \tfrac{1}{\varphi'(1)}
\]
So,
\[
\tfrac{h(x)}{\varphi(x)}=M(1)+\tfrac12 M'(1) (x-1) + \tfrac16 ( M''(1) + \tfrac{\varphi''(1)}{2\varphi'(1)}M'(1) ) (x-1)^2 +o(x-1)^2,
\]
as $x\to 1$ and the claim is proved.

Let us define the value of $\tfrac{h}{\varphi}$ at $x=1$ to be equal to
\[
\lim_{x\to 1}\tfrac{h(x)}{\varphi(x)}=M(1)
\]
and note that $M(1)>0$. Moreover, 
\[
\left.(\tfrac{h(x)}{\varphi(x)})'\right|_{x=1}\,= \tfrac12 M'(1),\quad
\left.(\tfrac{h(x)}{\varphi(x)})''\right|_{x=1}\,= \tfrac13 ( M''(1) + \tfrac{\varphi''(1)}{2\varphi'(1)}M'(1) )
\]
where $\left.  \right|_{x=1} $ stands for `the value at $x=1$'. 

Now, it follows from the claim and from $\tfrac{h}{\varphi}>0$ on $(0;+\infty)$ that
$\log \frac{h}{\varphi}$ is well defined on $(0;+\infty)$ and belongs to 
$\smooth{2}{0;+\infty}$.

Here, it is verified that the functions $h$, $\varphi$ and $M$ satisfy
an important simple inequality. 

{\itshape {Claim.}\/} $h-\varphi M >0$ on $(0;1)\cup(1;+\infty)$.

This inequality holds because of
\[
h(x)-\varphi(x)M(x)=(-1)\int_1^x \varphi(t) M'(t) dt >0
\]
for all $x\in (0;1)\cup(1;+\infty)$.

Hence, the derivative
\[
( \tfrac{h}{\varphi})'=(-1)\tfrac{\varphi'}{\varphi^2}(h-\varphi M)<0,
\mbox{ on } (0;+\infty).
\]

The derivative $(\log \tfrac{h}{\varphi} )''$ is calculated as follows.

From this point of the proof of the theorem to its end let the function $q(x)=1$ for all $x\in(0;+\infty)$ and $A$, $B$, $C$ be as in Definition~\ref{defABC}. 

In particular, 
\begin{gather*}
A= \varphi'' \varphi -  \varphi'^2,\quad B=( \varphi' M)' \varphi^2, \quad C=\varphi^2 \varphi'^2 M^2.
\end{gather*} 
and by~\eqref{E1identity} 
\[
E_1=-\varphi^2 \varphi'^3 \bigl( \frac{\varphi'' \varphi}{ \varphi'^2} \bigr)'=-\varphi^2(\varphi'^2\varphi''+\varphi \varphi' \varphi''' - 2 \varphi \varphi''^2 ).
\]

Thus, accordingly to the assumptions of the theorem, $E_1\ge 0$ on $(0;1)\cup(1;+\infty)$ and $\left. E_1 \right|_{x=1}\, = 0$. 
Furthermore,
\begin{align*}
A(1) &= \varphi''(1) \varphi(1) -  \varphi'^2(1)=
-  \varphi'^2(1)< 0\\
\bigl(\tfrac{A}{\varphi'^2}\bigr)'&=\tfrac{\varphi \varphi' \varphi''' + \varphi'^2 \varphi'' - 2 \varphi \varphi''^2}{\varphi'^3}\le 0 \mbox{ on } (0;+\infty)
\end{align*}
So, $\tfrac{A}{\varphi'^2}=\frac{\varphi'' \varphi}{\varphi'^2}-1$ decreases on $(0;+\infty)$ and therefore 
\begin{itemize}
\item $\left. \tfrac{A}{\varphi'^2} \right|_{x=1}\, <0$ $\implies$
there exist two possible cases: $A<0$ on $(0;+\infty)$ or there exists
a number $x_A\in(0;1)$ such that $A>0$ on $(0; x_A)$, $A(x_A)=0$ and $A<0$ on $(x_A;+\infty)$ (these cases are discussed bellow as {\itshape {Case 1}} and {\itshape {Case 2}});
\item $\frac{\varphi'' \varphi}{\varphi'^2}$ decreases on $(0;+\infty)$, its value $\left.\frac{\varphi'' \varphi}{\varphi'^2}\right|_{x=1}\, =0$ and hence $\varphi''<0$, $B=( \varphi'' M+ \varphi' M') \varphi^2<0$ on $(0;1)\cup(1;+\infty)$.
\end{itemize}

{\itshape {Case 1.}\/} Let us suppose that the function $\varphi$ is such that $A<0$ on $(0;+\infty)$.

In this case, $B^2-4AC>0$ on $(0;1)\cup(1;+\infty)$. So, the functions $F_{1,2}$ are well defined real valued on $(0;1)\cup(1;+\infty)$ and
\[
F_2<0<\tfrac{B}{2A}<F_1.
\]
as $C>0$. Moreover, 
\[
\left. (A F^2-B F+ C)\right|_{F=\varphi M}\, = - \varphi^3 \varphi' M M' \begin{cases}
<0& x\in(0;1)\\
>0& x\in(1;+\infty)
\end{cases}
\]
Therefore,
\begin{itemize}
\item if $x\in(0;1)$ then $\varphi M< F_2<0<F_1$, $\varphi M < h<0$ and by the identity~\eqref{derivativehF2} from Lemma~\ref{derivativehF} we obtain
\[
\begin{split}
&(h-F_2)' A(F_1-F_2) q \varphi^2 \varphi' M\\
&\hphantom{(h-F_2} = F_2 \Bigl((F_2-\varphi M)(A^2 (F_2-\varphi M) + E_1 M)+C \varphi^2 \bigl( q\frac{M'}{M}\bigr)' \Bigr)<0
\end{split}
\]
So, $(h-F_2)'>0$ on $(0;1)$. By the definition of $F_2$ it follows that the left hand side limit 
\[
\lim_{x\to 1^{-}}(h(x)-F_2(x))=0
\]
Thus, $h-F_2<0$ on $(0;1)$ and hence 
\[
A h^2-B h+C <0 \mbox{ on } (0;1).
\]

\item if $x\in(1;+\infty)$ then $ F_2<0<\varphi M<F_1$, $\varphi M \le h$ and by the identity~\eqref{derivativehF1} from Lemma~\ref{derivativehF} we obtain
\[
\begin{split}
&(h-F_1)' A(F_2-F_1) q \varphi^2 \varphi' M\\
&\hphantom{(h-F_1} = F_1 \Bigl((F_1-\varphi M)(A^2 (F_1-\varphi M) + E_1 M)+C \varphi^2 \bigl( q\frac{M'}{M}\bigr)' \Bigr)>0
\end{split}
\]
So, $(h-F_1)'>0$ on $(1;+\infty)$. By the definition of $F_1$ it follows that 
the right hand side limit 
\[
\lim_{x\to 1^{+}}(h(x)-F_1(x))=0
\]
Thus, $h-F_1>0$ on $(1;+\infty)$ and hence 
\[
A h^2-B h+C <0 \mbox{ on } (1;+\infty).
\]
\end{itemize}

Hence, in case 1,  $A h^2-B h+C <0$  on  $(0;1)\cup(1;+\infty)$.

{\itshape {Case 2.}\/} Let us suppose that the function $\varphi$ is such that there exists a number $x_A\in(0;1)$ such that $A>0$ on $(0; x_A)$, $A(x_A)=0$ and $A<0$ on $(x_A;+\infty)$.

In this case, 
\begin{itemize}
\item if $x\in(0;x_A)$ then $\varphi M \le h <0$,
\[
\left. (A F^2-B F+ C)\right|_{F=\varphi M}\, = - \varphi^3 \varphi' M M'<0,
\]
$C>0$ and hence $F_{1,2}$ are well defined real valued functions such that
\[
F_1<\varphi M < F_2 <0.
\]
We apply the identity~\eqref{derivativehF2} from Lemma~\ref{derivativehF} to obtain
\[
\begin{split}
&(h-F_2)' A(F_1-F_2) q \varphi^2 \varphi' M\\
&\hphantom{(h-F_2} = F_2 \Bigl((F_2-\varphi M)(A^2 (F_2-\varphi M) + E_1 M)+C \varphi^2 \bigl( q\frac{M'}{M}\bigr)' \Bigr)<0
\end{split}
\]
So, $(h-F_2)'>0$ on $(0;x_A)$.

\item if $x\in(x_A;1)$ then $A<0$, $C>0$, $B^2-4AC>0$,  $\varphi M \le h <0$,
\[
\left. (A F^2-B F+ C)\right|_{F=\varphi M}\, = - \varphi^3 \varphi' M M'<0,
\]
and hence $F_{1,2}$ are well defined real valued functions such that
\[
\varphi M <F_2< 0< F_1
\]
We apply the identity~\eqref{derivativehF2} from Lemma~\ref{derivativehF} to obtain
\[
\begin{split}
&(h-F_2)' A(F_1-F_2) q \varphi^2 \varphi' M\\
&\hphantom{(h-F_2} = F_2 \Bigl((F_2-\varphi M)(A^2 (F_2-\varphi M) + E_1 M)+C \varphi^2 \bigl( q\frac{M'}{M}\bigr)' \Bigr)<0
\end{split}
\]
So, $(h-F_2)'>0$ on $(x_A;1)$.
\end{itemize}

Thus, $(h-F_2)'>0$ on $(0;x_A)\cup(x_A;1)$. By $F_2=\frac{2C}{B-\sqrt{B^2-4AC}}$
it follows that $F_2$ is continuous at $x=x_A$. So, $h-F_2$ increases on $(0;1)$ and by $\lim\limits_{x\to 1^{-}}(h-F_2)=0$ we obtain that
\[
h-F_2<0 \mbox{ on } (0;1).
\]
Hence 
\[
A h^2-B h+C <0 \mbox{ on } (0;1).
\]

\begin{itemize}
\item if $x\in(1;+\infty)$ then $A<0$, $C>0$, $B^2-4AC>0$,  $0<\varphi M \le h$,
\[
\left. (A F^2-B F+ C)\right|_{F=\varphi M}\, = - \varphi^3 \varphi' M M'>0,
\]
and hence $F_{1,2}$ are well defined real valued functions such that
\[
F_2< 0<\varphi M <F_1
\]
We apply the identity~\eqref{derivativehF1} from Lemma~\ref{derivativehF} to obtain
\[
\begin{split}
&(h-F_1)' A(F_2-F_1) q \varphi^2 \varphi' M\\
&\hphantom{(h-F_1} = F_1 \Bigl((F_1-\varphi M)(A^2 (F_1-\varphi M) + E_1 M)+C \varphi^2 \bigl( q\frac{M'}{M}\bigr)' \Bigr)>0
\end{split}
\]
So, $(h-F_1)'>0$ on $(1;+\infty)$.
By the definition of $F_1$ it follows that 
the right hand side limit 
\[
\lim_{x\to 1^{+}}(h(x)-F_1(x))=0
\]
Thus, $h-F_1>0$ on $(1;+\infty)$ and hence 
\[
A h^2-B h+C <0 \mbox{ on } (1;+\infty).
\]
\end{itemize} 

Hence, in case 2,  $A h^2-B h+C <0$  on  $(0;1)\cup(1;+\infty)$.

Therefore 
\begin{equation}\label{eq:negative}
A h^2-B h+C <0 \mbox{ on } (0;1)\cup(1;+\infty)
\end{equation} 
in both {\itshape {Case 1.}\/} and~{\itshape {Case 2.}\/}

{\itshape {Claim.}\/} Let the function $q(x)=1$ for all $x\in(0;+\infty)$ and $A$, $B$, $C$ be as in Definition~\ref{defABC}. Then 
\begin{itemize}
\item there exists the limit
$\lim\limits_{x\to 1} \tfrac{(-1)(A h^2 - B h + C)}{\varphi^2 h^2} >0$
\item the second derivative 
$(\log \tfrac{h}{\varphi} )''=\tfrac{-1}{\varphi^2 h^2} (A h^2 - B h + C)$ 
on $(0;+\infty)$
\end{itemize}
In particular, $(\log \tfrac{h}{\varphi} )''$ is a well defined continuous function on $(0;+\infty)$.

The proof of the first item of this Claim is sketched out only. The limit is computed by using expansions~\eqref{th1:h},~\eqref{th1:f} and
$M(x)=M(1)+M'(1)(x-1)+\frac12 M''(1)(x-1)^2+o(x-1)^2$ as $x\to 1$
and the values of the derivatives of $h$ are substituted as follows $h'(1)=\varphi'(1) M(1)$, $h''(1)=\varphi''(1) M(1) + \varphi'(1) M'(1)$, $h'''(1)=\varphi'''(1) M(1) + 2\varphi''(1) M'(1)+ \varphi'(1) M''(1)$. 

The elements of the numerator, $Ah^2$, $Bh$ and $C$, are calculated with a
precision of $o(x-1)^4$. So, the numerator 
\begin{multline*}
(-1)(A(x) h(x)^2-B(x) h(x)+C(x))\\
=\tfrac{\varphi'^4(1)M^2(1)}{6}(\tfrac{M'(1)}{M(1)}\,\tfrac{\varphi''(1)}{\varphi'(1)}
+  \tfrac{4 M(1)M''(1) -3 M'^2(1)}{2\, M^2(1)})(x-1)^4+o(x-1)^4
\end{multline*}
as $x\to 1$.

The denominator, $\varphi^2 h^2$, is  calculated with a
precision of $o(x-1)^4$. So, the denominator 
\[
\varphi^2 h^2=M^2(1)\varphi'^4(1)(x-1)^4+o(x-1)^4
\]
as $x\to 1$.

Thus, there exists the limit
\[
\lim_{x\to 1} \tfrac{(-1)(A h^2 - B h + C)}{\varphi^2 h^2} =
\tfrac{1}{6}(\tfrac{M'(1)}{M(1)}\,\tfrac{\varphi''(1)}{\varphi'(1)}
+  \tfrac{4 M(1)M''(1) -3 M'^2(1)}{2\, M^2(1)})
\]
Note that the limit is a positive number because 
\begin{gather*}
M'(1)\varphi''(1)\ge 0,\\
4 M(1)M''(1) -3 M'^2(1)= M(1)M''(1)+3( M(1)M''(1) - M'^2(1))>0.
\end{gather*} 

In order to prove the second item of the Claim, 
we calculate the second derivative 
\[
(\log \tfrac{h}{\varphi} )''=(q (\log \tfrac{h}{\varphi} )')'=
(\tfrac{q h' \varphi}{h \varphi'})'=(\tfrac{q \varphi M}{h })'=
\tfrac{(-1)(A h^2 - B h + C)}{\varphi^2 h^2} 
\]
on $(0;1)\cup(1;+\infty)$ and it is a continuous on $(0;1)$ and $(1;+\infty)$. There exists a finite limit of the second derivative as $x\to 1$, accordingly to the first item. 
This result and the existence of the finite derivative $(\log \tfrac{h}{\varphi} )''$ at $x=1$ (note that $\log \tfrac{h}{\varphi}\in \smooth{2}{0;+\infty}$), they imply that the second derivative $(\log \frac{h}{\varphi} )''$ is a continuous function at $x=1$.
Therefore, the second derivative $(\log \frac{h}{\varphi} )''$ is a continuous function on $(0;+\infty)$.

So, the Claim is proved. 

By this Claim and~\eqref{eq:negative} it follows that the second derivative 
\[
(\log \frac{h}{\varphi} )''>0 \mbox{ on } (0;+\infty).
\]  
Thus, the theorem is proved. 
\end{proof}

\begin{theorem}\label{th2}
Assume the functions $\varphi:[0;+\infty)\to [0; +\infty)$, $M:[0;+\infty)\to (0;+\infty)$ and $h:[0;+\infty)\to [0; +\infty)$ are such that $\varphi \in \smooth{3}{0;+\infty}$, $M\in \smooth{2}{0;+\infty}$ and meet the conditions 
\begin{enumerate}\renewcommand{\labelenumi}{({\itshape \roman{enumi}}\,)~}
\item the right hand limit $\lim\limits_{x\to 0^{+}}\frac{\varphi''(x) \varphi(x)}{\varphi'^2(x)}<1$,
\item the functions $M$ and $M'$ are continuous from the right at $x=0$ and  $M(0)\neq +\infty$ and $M'(0)\neq -\infty$,
\item $M'<0$, $(\log M)''\ge 0$ on $(0;+\infty)$ 
\item $\varphi'>0$ on $(0;+\infty)$ and $\varphi(x)=\int_0^x \varphi'(t) dt$
for all $x\in (0;+\infty)$
\item $\varphi'^2\varphi''+\varphi \varphi' \varphi''' - 2 \varphi \varphi''^2 \le 0$ on $(0;+\infty)$,
\item $h(x)=\int_0^x \varphi'(t) M(t) dt$ for all $x\in(0;+\infty)$
\end{enumerate}
Then $\frac{h}{\varphi}$ and  $\log \frac{h}{\varphi}$ both belong to $\smooth{2}{0;+\infty}$ and moreover
\[
\bigl(\frac{h}{\varphi} \bigr)'<0, \bigl(\log \frac{h}{\varphi} \bigr)''>0 \mbox{ on } (0;+\infty)
\]
\end{theorem}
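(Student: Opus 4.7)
My plan is to mirror the proof of Theorem~\ref{th1}, with the ``reference point'' moved from the interior zero $x=1$ of $\varphi$ to the boundary zero $x=0$. Condition (i) will force us into the analogue of Case~1 only, so no case-split is necessary.

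First I would set $q\equiv 1$ and dispose of the elementary facts. Identity~\eqref{E1identity} together with condition (v) gives $E_1\ge 0$, hence $A/\varphi'^2$ is non-increasing on $(0;+\infty)$; combined with condition (i), which states $\lim_{x\to 0^+}A/\varphi'^2<0$, this forces $A<0$ throughout $(0;+\infty)$. Condition (iii) and integration by parts give $h-\varphi M=-\int_0^x\varphi M'\,dt>0$, so $(h/\varphi)'=-\varphi'(h-\varphi M)/\varphi^2<0$. Since $A<0$ and $C>0$ we have $B^2-4AC>0$, so $F_{1,2}$ are well-defined and real with $F_2<0<F_1$; plugging $F=\varphi M$ into $AF^2-BF+C$ yields $-\varphi^3\varphi'MM'>0$, hence $F_2<\varphi M<F_1$.

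Next I would apply identity~\eqref{derivativehF1} from Lemma~\ref{derivativehF}. Every factor on the right-hand side --- namely $F_1$, $F_1-\varphi M$, $A^2(F_1-\varphi M)+E_1 M$, and $C\varphi^2(M'/M)'$ --- is non-negative (using $F_1>0$, $F_1>\varphi M$, $E_1\ge 0$, $(\log M)''\ge 0$), while the coefficient $A(F_2-F_1)q\varphi^2\varphi'M$ on the left is positive. Hence $(h-F_1)'>0$ on $(0;+\infty)$. Provided one also has $\lim_{x\to 0^+}(h-F_1)=0$, it follows that $h>F_1$ on $(0;+\infty)$; since $AF^2-BF+C$ is a downward parabola ($A<0$) with larger root $F_1$, this gives $Ah^2-Bh+C<0$, and then the identity $(\log(h/\varphi))''=-(Ah^2-Bh+C)/(\varphi^2 h^2)$ delivers $(\log(h/\varphi))''>0$.

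The main obstacle is verifying $\lim_{x\to 0^+}(h(x)-F_1(x))=0$, since direct substitution at $x=0$ is not available: $\varphi'$ may blow up there (cf.\ Example~\ref{exa:weights}(3)), so $B$ and $C$ need not individually tend to $0$. I would handle this via $F_1F_2=C/A=\varphi^2 M^2/(A/\varphi'^2)$ and $F_1+F_2=B/A=\varphi^2(\varphi' M)'/A$: condition (i) gives a finite strictly negative limit for $A/\varphi'^2$, condition (ii) controls $M$ and $M'$ at $0$, and condition (iv) yields $\varphi(x)\to 0$. Together these imply $F_1F_2\to 0$ and $F_1+F_2\to 0$, whence both roots tend to $0$. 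Since $h(x)\to 0$ as well (again by (ii) and $\varphi(0)=0$), the required boundary limit follows.
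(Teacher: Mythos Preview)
Your proposal is correct and follows the paper's proof essentially step for step: $E_1\ge 0$ forces $A<0$ throughout (so only the analogue of Case~1 arises), identity~\eqref{derivativehF1} gives $(h-F_1)'>0$, and the boundary limit $h-F_1\to 0$ at $0^+$ then yields $Ah^2-Bh+C<0$ and hence $(\log(h/\varphi))''>0$. The only cosmetic difference is in the limit step: the paper shows $\lim_{x\to 0^+}F_1=0$ by factoring $\varphi$ out of the explicit quadratic formula for $F_1$, whereas you reach the same conclusion via the Vieta relations $F_1+F_2=B/A$ and $F_1F_2=C/A$; both computations rest on the same ingredients (finite limits of $A/\varphi'^2$, $M$, $M'$ at $0^+$ together with $\varphi\to 0$, and implicitly that $\varphi/\varphi'$ stays bounded there, which follows from $(\varphi/\varphi')'=-A/\varphi'^2>0$).
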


\begin{proof}[Proof of Theorem~\ref{th2}]
By the assumptions it is clear that $\frac{h}{\varphi}$ and  $\log \frac{h}{\varphi}$ both belong to $\smooth{2}{0;+\infty}$. Furthermore,
\[
h(x)-\varphi(x) M(x) = \int_0^x (-1) \varphi(t) M'(t) dt>0 \mbox{ for all } x\in(0;+\infty).
\]
Hence, 
\[
(\tfrac{h}{\varphi})'=\tfrac{-\varphi'}{\varphi^2} (h- \varphi M) <0 \mbox{ on }
(0;+\infty).
\]

From this point of the proof of the theorem to its end let the function $q(x)=1$ for all $x\in(0;+\infty)$ and $A$, $B$, $C$ be as in Definition~\ref{defABC}. 

In particular, 
\begin{gather*}
A= \varphi'' \varphi -  \varphi'^2,\quad B=( \varphi' M)' \varphi^2, \quad C=\varphi^2 \varphi'^2 M^2.
\end{gather*} 
and by~\eqref{E1identity} 
\[
E_1=-\varphi^2 \varphi'^3 \bigl( \frac{\varphi'' \varphi}{ \varphi'^2} \bigr)'=-\varphi^2(\varphi'^2\varphi''+\varphi \varphi' \varphi''' - 2 \varphi \varphi''^2 ).
\]

Thus, accordingly to the assumptions of the theorem, $E_1\ge 0$ on $(0;+\infty)$. 
Furthermore,
\[
\bigl(\tfrac{A}{\varphi'^2}\bigr)'=\tfrac{\varphi \varphi' \varphi''' + \varphi'^2 \varphi'' - 2 \varphi \varphi''^2}{\varphi'^3}\le 0 \mbox{ on } (0;+\infty)
\]
So, $\tfrac{A}{\varphi'^2}=\frac{\varphi'' \varphi}{\varphi'^2}-1$ decreases on $(0;+\infty)$ and $\tfrac{A}{\varphi'^2}\le \lim_{x\to 0^{+}} \tfrac{A}{\varphi'^2}<0$  

The inequalities $A<0$, $C>0$ on $(0;+\infty)$ imply that $F_1$, $F_2$
are well defined real valued functions on $(0;+\infty)$ and together with the inequality 
\[
\left. (A F^2-B F+ C)\right|_{F=\varphi M}\, = - \varphi^3 \varphi' M M' >0
\]
it follows that
\[
F_2<0< \varphi M< F_1 \mbox{ on } (0;+\infty).
\]

We apply the identity~\eqref{derivativehF1} from Lemma~\ref{derivativehF} to obtain
\[
\begin{split}
&(h-F_1)' A(F_2-F_1) q \varphi^2 \varphi' M\\
&\hphantom{(h-F_1} = F_1 \Bigl((F_1-\varphi M)(A^2 (F_1-\varphi M) + E_1 M)+C \varphi^2 \bigl( q\frac{M'}{M}\bigr)' \Bigr)>0
\end{split}
\]
So, $(h-F_1)'>0$ on $(0;+\infty)$.

Moreover, $\left. (h-F_1)\right|_{0^{+}} = 0$ because of $h(0^{+})=0 $ and 
\begin{align*}
&\lim_{x\to 0^{+}} F_1(x) = \lim_{x\to 0^{+}} \frac{B-\sqrt{B^2-4AC}}{2A} \\
&=\lim_{x\to 0^{+}} \tfrac{(\varphi''M+\varphi'M')\varphi^2- \sqrt{(\varphi''M+\varphi'M')^2\varphi^4- 4 A \varphi^2 \varphi'^2 M^2}}{2A}\\
&=\lim_{x\to 0^{+}} \varphi \,\tfrac{\bigl( (\frac{A}{\varphi'^2}+1)M + \frac{\varphi}{\varphi'} M'\bigr) - \sqrt{ \bigl( (\frac{A}{\varphi'^2}+1)M + \frac{\varphi}{\varphi'} M'\bigr)^2 - 4 \frac{A}{\varphi'^2} M^2 } }{ 2 \frac{A}{\varphi'^2} }=0
\end{align*}

Hence, $h-F_1>0$ on $(0;+\infty)$. Therefore,
\[
A h^2-B h+C <0 \mbox{ on } (0;+\infty)
\]
and the second derivative 
\[
(\log \tfrac{h}{\varphi} )''=(q (\log \tfrac{h}{\varphi} )')'=
(\tfrac{q h' \varphi}{h \varphi'})'=(\tfrac{q \varphi M}{h })'=
\tfrac{(-1)(A h^2 - B h + C)}{\varphi^2 h^2} >0
\]
on $(0;+\infty)$.

Thus the theorem is proved.
\end{proof}

\begin{theorem}\label{th3}
Assume the functions $\varphi:(0;+\infty)\to (-\infty; 0)$, $M:(0;+\infty)\to (0;+\infty)$ and $h:(0;+\infty)\to (-\infty;0)$ are such that $\varphi \in \smooth{3}{0;+\infty}$, $M\in \smooth{2}{0;+\infty}$ and meet the conditions 
\begin{enumerate}\renewcommand{\labelenumi}{({\itshape \roman{enumi}}\,)~}
\item $M'<0$ and $(\log M)''\ge 0$ on $(0;+\infty)$ 
\item $\varphi'>0$ on $(0;+\infty)$ and $\varphi(x)=-\int_x^{+\infty} \varphi'(t) dt$, $\forall x\in (0;+\infty)$
\item $\varphi'^2\varphi''+\varphi \varphi' \varphi''' - 2 \varphi \varphi''^2 \le 0$ on $(0;+\infty)$, 
\item $h(x)=-\int_x^{+\infty} \varphi'(t) M(t) dt$ for all $x\in(0;+\infty)$
\end{enumerate}
Then $\frac{h}{\varphi}$ and  $\log \frac{h}{\varphi}$ both belong to $\smooth{2}{0;+\infty}$ and moreover
\[
\bigl(\frac{h}{\varphi} \bigr)'<0, \bigl(\log \frac{h}{\varphi} \bigr)''>0 \mbox{ on } (0;+\infty)
\]
\end{theorem}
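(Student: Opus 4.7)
The plan is to imitate the proof of Theorem~\ref{th2}, with two adaptations: the boundary analysis is performed as $x\to+\infty$ rather than as $x\to 0^+$, and since nothing in the hypotheses pins down a single sign of $A$, a case split in the spirit of Theorem~\ref{th1} is required.

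Set $q\equiv 1$ and recall $A=\varphi''\varphi-\varphi'^2$, $B=(\varphi'M)'\varphi^2$, $C=\varphi^2\varphi'^2M^2$. Identity~\eqref{E1identity} combined with hypothesis~(iii) yields $E_1\ge 0$, and direct differentiation gives $(A/\varphi'^2)'\le 0$, so $A/\varphi'^2$ is monotone decreasing on $(0;+\infty)$. Since $h$ and $\varphi$ are strictly negative and belong to $\smooth{3}{0;+\infty}$, the ratio $h/\varphi>0$ and its logarithm both lie in $\smooth{2}{0;+\infty}$. Because $M'<0$, $M$ is bounded above on every $[x_0;+\infty)$; combined with $\varphi\to 0$ at $+\infty$, this gives $\varphi M\to 0$ there, and integration by parts on $h(x)=-\int_x^{+\infty}\varphi'(t)M(t)\,dt$ yields
\[
h(x)-\varphi(x)M(x)=\int_x^{+\infty}\varphi(t)M'(t)\,dt>0,
\]
because $\varphi<0$ and $M'<0$. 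Hence $(h/\varphi)'=-\varphi'(h-\varphi M)/\varphi^2<0$ on $(0;+\infty)$.

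The formula $(\log(h/\varphi))''=-(Ah^2-Bh+C)/(\varphi^2 h^2)$, established in the proof of Theorem~\ref{th1} under exactly these hypotheses on $q$, $\varphi$, $M$, $h$, reduces the problem to showing $Ah^2-Bh+C<0$. Direct substitution gives $(AF^2-BF+C)|_{F=\varphi M}=-\varphi^3\varphi'MM'<0$, which locates $\varphi M$ relative to the roots $F_1,F_2$. Monotonicity of $A/\varphi'^2$ leaves three possibilities: $A<0$ throughout, $A>0$ throughout, or a single sign change at some $x_A$. When $A<0$, since $(AF^2-BF+C)|_{F=0}=C>0$, we have $F_2<0<F_1$, and the preceding inequality forces $\varphi M<F_2$. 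When $A>0$, the inequality $\varphi''\varphi>\varphi'^2>0$ combined with $\varphi<0$ gives $\varphi''<0$, whence $B<0$; Vieta's formulas then yield $F_1<F_2<0$, and the inequality places $F_1<\varphi M<F_2$. In either case identity~\eqref{derivativehF2} of Lemma~\ref{derivativehF} has nonpositive left coefficient $A(F_1-F_2)q\varphi^2\varphi'M=-\sqrt{B^2-4AC}\,q\varphi^2\varphi'M$ and nonpositive right side ($F_2<0$ times a nonnegative bracket, in view of $F_2-\varphi M>0$, $E_1\ge 0$, and $(\log M)''\ge 0$), forcing $(h-F_2)'>0$. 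The sign-change case is glued across $x_A$ via the continuity of $F_2=2C/(B-\sqrt{B^2-4AC})$ there, exactly as in Case~2 of the proof of Theorem~\ref{th1}.

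The main obstacle is the boundary analysis at $x\to+\infty$: to close the monotonicity argument one needs $h-F_2\to 0$ there. Using the rearrangement
\[
F_2=\varphi\cdot\frac{\bigl(\tfrac{A}{\varphi'^2}+1\bigr)M+\tfrac{\varphi}{\varphi'}M'+\sqrt{\bigl(\bigl(\tfrac{A}{\varphi'^2}+1\bigr)M+\tfrac{\varphi}{\varphi'}M'\bigr)^{2}-4\tfrac{A}{\varphi'^2}M^2}}{2A/\varphi'^2},
\]
analogous to the limit computation in the proof of Theorem~\ref{th2}, the monotonicity of $A/\varphi'^2$ ensures a limit $c=\lim_{x\to+\infty}A/\varphi'^2\in[-\infty,0]$, and the factor $\varphi\to 0$ drives $F_2\to 0$; since $h\to 0$ at $+\infty$ too, the monotone function $h-F_2$ has right-limit $0$ there, so $h<F_2$ on $(0;+\infty)$. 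In the $A<0$ case this places $h$ outside both real roots of the downward parabola; in the $A>0$ case, combined with $h>\varphi M>F_1$, it places $F_1<h<F_2$ between the two roots of the upward parabola. Either way $Ah^2-Bh+C<0$, and hence $(\log(h/\varphi))''>0$ on $(0;+\infty)$. The degenerate subcase $A\equiv 0$ (cf.~Example~\ref{exa:weights}(6)) is handled instead via Lemma~\ref{derivativehBC}.
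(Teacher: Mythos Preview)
Your strategy matches the paper's proof: reduce to $Ah^2-Bh+C<0$, split on the sign of $A$, use identity~\eqref{derivativehF2} to obtain $(h-F_2)'>0$, then invoke the boundary limit $h-F_2\to 0$ at $+\infty$ to conclude $h<F_2$. The placement $\varphi M<F_2<0$ (resp.\ $F_1<\varphi M<F_2<0$ when $A>0$), the sign analysis of the right side of~\eqref{derivativehF2}, and the continuity glue across $x_A$ via $F_2=2C/(B-\sqrt{B^2-4AC})$ are all correct and coincide with the paper's argument.

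The gap is in your limit computation for $F_2$. Your claim $c=\lim_{x\to+\infty}A/\varphi'^2\in[-\infty,0]$ is false when $A>0$ throughout: for $\varphi(x)=-1/x$ (Example~\ref{exa:weights}(5) with $a=2$, which satisfies all hypotheses of the theorem) one has $A/\varphi'^2\equiv 1$, so $c=1>0$. And even for correctly identified $c$, the assertion that ``$\varphi\to 0$ drives $F_2\to 0$'' presumes the displayed fraction stays bounded, which in turn needs $\varphi M'/\varphi'$ controlled at infinity---something you never address and which is not immediate from the hypotheses. The paper sidesteps this entirely: having already established $\varphi M<F_2<0$ in every case and knowing $\varphi M\to 0$ (since $M$ is bounded on any right half-line and $\varphi\to 0$), the squeeze yields $F_2\to 0$ in one line. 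Replace your explicit-formula limit with this sandwich and the proof goes through.
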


\begin{proof}[Proof of Theorem~\ref{th3}]
By the assumptions it is clear that $\frac{h}{\varphi}$ and  $\log \frac{h}{\varphi}$ both belong to $\smooth{2}{0;+\infty}$. 

{\itshape {Claim.}\/} $h-\varphi M>0$ on $(0;+\infty)$.
Indeed, by the assumptions of the theorem 
\begin{itemize}
\item $(h-\varphi M)'=- \varphi M' <0$ on $(0;+\infty)$
\item $M$ decreases on $(0;+\infty)$ and $M>0$. So, the \\ limit $\lim_{x\to{+\infty}} M(x)$ exists and it is a non-negative number.
\end{itemize}
Therefore,  $h-\varphi M$ decreases on $(0;+\infty)$ and 
\[
\lim_{x\to +\infty} (h(x)-\varphi(x) M(x)) = 0.
\]
Hence, $h-\varphi M>0$ on $(0;+\infty)$.

It follows from this Claim that the derivative
\[
(\tfrac{h}{\varphi})'=\tfrac{-\varphi'}{\varphi^2} (h- \varphi M) <0 \mbox{ on }
(0;+\infty).
\]

From this point of the proof of the theorem to its end let the function $q(x)=1$ for all $x\in(0;+\infty)$ and $A$, $B$, $C$ be as in Definition~\ref{defABC}. 

In particular, 
\begin{gather*}
A= \varphi'' \varphi -  \varphi'^2,\quad B=( \varphi' M)' \varphi^2, \quad C=\varphi^2 \varphi'^2 M^2.
\end{gather*} 
and by~\eqref{E1identity} 
\[
E_1=-\varphi^2 \varphi'^3 \bigl( \frac{\varphi'' \varphi}{ \varphi'^2} \bigr)'=-\varphi^2(\varphi'^2\varphi''+\varphi \varphi' \varphi''' - 2 \varphi \varphi''^2 ).
\]

Note that for every $x\in(0;+\infty)$
\begin{equation}\label{th3:negativity}
\left. (A F^2-B F+ C)\right|_{F=\varphi M}\, = - \varphi^3 \varphi' M M' <0.
\end{equation}

Thus, accordingly to the assumptions of the theorem, $E_1\ge 0$ on $(0;+\infty)$. 
Furthermore,
\[
\bigl(\tfrac{A}{\varphi'^2}\bigr)'=\tfrac{\varphi \varphi' \varphi''' + \varphi'^2 \varphi'' - 2 \varphi \varphi''^2}{\varphi'^3}\le 0 
\]
and $\tfrac{A}{\varphi'^2}$ decreases on $(0;+\infty)$.

Now, there are three cases.

{\itshape {Case 1.}\/} $A<0$ on $(0;+\infty)$.
In this case $B^2-4AC>0$ and hence $F_{1,2}$ are well defined real valued functions such that
\[
\varphi M < F_2 <0<F_1.
\]
Hence, $0=\lim_{x\to +\infty} \varphi(x) M(x) \le\lim_{x\to +\infty} F_2(x)\le 0$ and 
\[
\lim_{x\to +\infty} (h(x)-F_2(x))=0.
\]

We apply the identity~\eqref{derivativehF2} from Lemma~\ref{derivativehF} to obtain
\[
\begin{split}
&(h-F_2)' A(F_1-F_2) q \varphi^2 \varphi' M\\
&\hphantom{(h-F_2} = F_2 \Bigl((F_2-\varphi M)(A^2 (F_2-\varphi M) + E_1 M)+C \varphi^2 \bigl( q\frac{M'}{M}\bigr)' \Bigr)<0
\end{split}
\]
So, $(h-F_2)'>0$ on $(0;+\infty)$ and $h-F_2$ increases on $(0;+\infty)$.

Therefore, $h-F_2<0$ on $(0;+\infty)$ and by $\varphi M <h< F_2 <0<F_1$ it follows that 
\[
Ah^2-Bh+C<0 \mbox{ on } (0;+\infty).
\]

{\itshape {Case 2.}\/} There exists $x_A\in(0;+\infty)$ such that  $A>0$ on $(0;x_A)$,  $A(x_A)=0$, $A<0$ on $(x_A;+\infty)$.

In this case, 
\begin{itemize}
\item if $x\in(0;x_A)$ then 
\[
\left. (A F^2-B F+ C)\right|_{F=\varphi M}\, = - \varphi^3 \varphi' M M'<0,
\]
$C>0$ and hence $F_{1,2}$ are well defined real valued functions such that
\[
F_1<\varphi M < F_2 <0.
\]
We apply the identity~\eqref{derivativehF2} from Lemma~\ref{derivativehF} to obtain
\[
\begin{split}
&(h-F_2)' A(F_1-F_2) q \varphi^2 \varphi' M\\
&\hphantom{(h-F_2} = F_2 \Bigl((F_2-\varphi M)(A^2 (F_2-\varphi M) + E_1 M)+C \varphi^2 \bigl( q\frac{M'}{M}\bigr)' \Bigr)<0
\end{split}
\]
So, $(h-F_2)'>0$ on $(0;x_A)$.
\item if $x\in(x_A;+\infty)$ then $B^2-4AC>0$ and hence $F_{1,2}$ are well defined real valued functions such that
\[
\varphi M < F_2 <0<F_1.
\]
Hence, $0=\lim_{x\to +\infty} \varphi(x) M(x) \le\lim_{x\to +\infty} F_2(x)\le 0$ and 
\[
\lim_{x\to +\infty} (h(x)-F_2(x))=0.
\]
We apply the identity~\eqref{derivativehF2} from Lemma~\ref{derivativehF} to obtain
\[
\begin{split}
&(h-F_2)' A(F_1-F_2) q \varphi^2 \varphi' M\\
&\hphantom{(h-F_2} = F_2 \Bigl((F_2-\varphi M)(A^2 (F_2-\varphi M) + E_1 M)+C \varphi^2 \bigl( q\frac{M'}{M}\bigr)' \Bigr)<0
\end{split}
\]
So, $(h-F_2)'>0$ on $(x_A;+\infty)$.
\end{itemize}
Thus, $h-F_2$ increases on $(0;x_A)$ and on $(x_A;+\infty)$.

Moreover, in this case, $\varphi''(x_A)<0$. Hence,  the inequality $B=(\varphi'' M + \varphi' M' )\varphi^2<0$ holds in a neighborhood of $x_A$. So,
\[
F_2=\frac{2C}{B-\sqrt{B^2-4AC}} 
\]
is continuous.

Hence,  $h-F_2$ increases on $(0;+\infty)$.
Therefore, $h-F_2<0$ on $(0;+\infty)$.

Now, we prove that $A h^2-B h +C <0$ on $(0;+\infty)$.
Indeed, 
\begin{itemize}
\item $Ah^2-Bh+C<0$ on $(0;x_A)$ because of $F_1<\varphi M < h < F_2<0$ and $A>0$
\item $Ah^2-Bh+C<0$ on $(0;x_A)$ because of $\varphi M < h < F_2<0<F_1$ and $A<0$
\end{itemize}

{\itshape {Case 3.}\/} $A>0$ on $(0;+\infty)$.
In this case, 
\[
\left. (A F^2-B F+ C)\right|_{F=\varphi M}\, = - \varphi^3 \varphi' M M'<0,
\]
$C>0$ and hence $F_{1,2}$ are well defined real valued functions such that
\[
F_1<\varphi M < F_2 <0.
\]
Hence, $0=\lim_{x\to +\infty} \varphi(x) M(x) \le\lim_{x\to +\infty} F_2(x)\le 0$ and 
\[
\lim_{x\to +\infty} (h(x)-F_2(x))=0.
\]

We apply the identity~\eqref{derivativehF2} from Lemma~\ref{derivativehF} to obtain
\[
\begin{split}
&(h-F_2)' A(F_1-F_2) q \varphi^2 \varphi' M\\
&\hphantom{(h-F_2} = F_2 \Bigl((F_2-\varphi M)(A^2 (F_2-\varphi M) + E_1 M)+C \varphi^2 \bigl( q\frac{M'}{M}\bigr)' \Bigr)<0
\end{split}
\]
So, $(h-F_2)'>0$ on $(0;+\infty)$.
Hence,  $h-F_2$ increases on $(0;+\infty)$.
Therefore, $h-F_2<0$ on $(0;+\infty)$ and
\[
Ah^2-Bh+C<0 \mbox{ on } (0;+\infty).
\]

Therefore,
\[
A h^2-B h+C <0 \mbox{ on } (0;+\infty)
\]
and the second derivative 
\[
(\log \tfrac{h}{\varphi} )''=(q (\log \tfrac{h}{\varphi} )')'=
(\tfrac{q h' \varphi}{h \varphi'})'=(\tfrac{q \varphi M}{h })'=
\tfrac{(-1)(A h^2 - B h + C)}{\varphi^2 h^2} >0
\]
on $(0;+\infty)$.

Thus the theorem is proved.
\end{proof}

\section{Applications}
Here we start with a note about integral means of holomorphic on the upper half plane functions proved in a paper~\cite{HardyInghamPolya1927} by G.~Hardy, A.~Ingham, G.~Pol\'ya in 1927.
In subsection~4.3 they proved that if the holomorphic function in a strip of the complex plane meets some conditions such as growth at infinity $O(e^{e^{k|z|}})$ and convergence
of the integrals on the boundaries of the strip then in the case when $2\le p <+\infty$ the integral mean
\[
M(y)=\int_{-\infty}^{+\infty} |f(x+iy)|^p dx
\] 
has first derivative
\[
M'(y)=p\int_{-\infty}^{+\infty} |f(x+iy)|^{p-2}(u u'_y+ v v'_y) dx
\]
and second derivative
\[
M''(y)=p^2\int_{-\infty}^{+\infty} |f(x+iy)|^{p-2}(u'^2_y+ v'^2_y) dx
\]
where $u$ is the real part of $f$ and $v$ is the imaginary part of $f$.
Moreover, they proved that 
\[
M'^2\le M'' M.
\]
So, $(\log M)''\ge 0$. Note that, $M''\ge 0$.

In the present paper we consider such an integral mean of holomorphic on the upper half plane function under the conditions $2\le p <+\infty$ and
\[
\sup_{y>0} M(y)<+\infty
\]
i.e. we consider function $f$ that belongs to the Hardy space $H^p$ of holomorphic on the upper half plane functions. It is well known (see  ``Bounded analytic functions'' by J.~Garnett) that such a function meets the growth condition
$|f(x+iy)|=O(y^{-1/p})$ (both, as $y\to 0^{+}$ and $y\to +\infty$), the integral mean $M$ is a non-increasing function on $(0;+\infty)$, the right hand side limit at $y=0$ is
\[
M(0)=M(0^{+}) =  \lim_{y\to 0^{+}} M(y) = \sup_{y>0} M(y)<+\infty.
\]
where $M(0)$ is defined to be the $L^p$ norm of the boundary values of $f$.

Now, note if $f$ is not the zero function then that $M'<0$ on $(0;+\infty)$. Indeed, if there is a $y_0\in(0;+\infty)$
such that $M'(y_0)=0$ then 
\begin{itemize}
\item on the one hand $M'\ge 0$ on $(y_0;+\infty)$ because of $M''\ge 0$, 
\item on the other hand $M'\le 0$ on $(0;+\infty)$ as $M$ is a non-increasing function.
\end{itemize}
Hence, $M'=0$ on $(y_0;+\infty)$. Therefore, $M''=0$ on $(y_0;+\infty)$. So,
$|f'|^2=u'^2_y+ v'^2_y=0$ and $f=0$ because it is the only constant function that belongs to the Hardy space $H^p$, $2\le p <+\infty$. 

Thus, we have proved the following lemma

\begin{lemma}\label{lemma:integralMean} Let $p$ be such that $2\le p <+\infty$,  $f\in H^p$ ($H^p$ is the Hardy space of holomorphic functions on the upper half plane). If $f$ is not the zero function then the integral mean
\[
M(y)=\int_{-\infty}^{+\infty} |f(x+iy)|^p dx
\]
is a bounded continuous function on $[0;+\infty)$ such that $M\in \smooth{2}{0;+\infty}$ and
\[
M>0,\quad M'<0,\quad M''>0,\quad  (\log M)''\ge 0 \mbox{ on } (0;+\infty).
\]
\end{lemma}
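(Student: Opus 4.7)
The plan is to reduce the lemma to two standard inputs: the boundary theory of $H^p$ on the upper half plane (for boundedness, continuity on $[0;+\infty)$, monotonicity, and the pointwise bound $|f(x+iy)|=O(y^{-1/p})$) together with the Hardy--Ingham--P\'olya formulas and inequality quoted just before the lemma (for twice differentiability, $M''\ge 0$, and $M'^2\le M''M$). All strict inequalities will then follow by invoking analyticity of $f$ and the hypothesis $f\not\equiv 0$.

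Concretely I would proceed in three steps. First, invoke standard $H^p$ theory (as in Garnett) to obtain that $M$ is non-increasing on $(0;+\infty)$, that $M(0^{+})=\sup_{y>0}M(y)$ is finite and equals the $L^p$-norm of the boundary function, and that $|f(x+iy)|=O(y^{-1/p})$; setting $M(0):=M(0^{+})$ makes $M$ continuous on $[0;+\infty)$. Second, the pointwise bound shows that on every relatively compact horizontal strip $0<a\le \im z\le b<+\infty$ the hypotheses of~\cite{HardyInghamPolya1927} (growth control, convergence of the boundary integrals $M(a)$, $M(b)$) hold, which produces $M\in\smooth{2}{0;+\infty}$ together with the displayed formulas for $M'$ and $M''$ and the inequality $M'^2\le M''M$. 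Since the integrand of $M''$ is non-negative this immediately gives $M''\ge 0$; combined with $M>0$ (next step), it also gives $(\log M)''\ge 0$.

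Third, I would upgrade $M>0$, $M''>0$ and $M'<0$ from non-strict inequalities by analyticity. If $M(y_1)=0$ then $f$ vanishes a.e., and hence by continuity everywhere, on the line $\im z=y_1$, so the identity principle forces $f\equiv 0$. If $M''(y_1)=0$ then $|f|^{p-2}(u'^{2}_{y}+v'^{2}_{y})=0$ a.e. on that line; since the zeros of the holomorphic function $f$ on $\im z=y_1$ form an at most countable set, this forces $f'(x+iy_1)=0$ outside a countable set and then, by continuity, on the whole line, so by analyticity $f'\equiv 0$, making $f$ a constant, which in $H^p$ with $p<+\infty$ must be $0$. Finally, for $M'<0$ I would follow the argument already sketched just before the lemma: if $M'(y_0)=0$, then $M''\ge 0$ gives $M'\ge 0$ on $(y_0;+\infty)$, while $M$ non-increasing gives $M'\le 0$, so $M'\equiv 0$ on $(y_0;+\infty)$, whence $M''\equiv 0$ there, contradicting the strict $M''>0$ just obtained.

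The main obstacle will be the careful verification that Hardy--Ingham--P\'olya applies on each relatively compact horizontal strip $0<a\le y\le b<+\infty$. The doubly-exponential growth hypothesis is free from $|f(x+iy)|=O(y^{-1/p})$ and the convergence of the boundary integrals is just finiteness of $M(a)$ and $M(b)$ established in the first step, but the uniform-in-$x$ justification of differentiation under the integral sign (which is what yields $M\in\smooth{2}{0;+\infty}$ and the two explicit formulas) has to be drawn carefully from that same decay estimate. Once this is in place the rest is bookkeeping.
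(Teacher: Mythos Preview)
Your proposal is correct and follows essentially the same approach as the paper: the paper's proof is precisely the discussion preceding the lemma, combining the $H^p$ boundary theory from Garnett with the Hardy--Ingham--P\'olya formulas and inequality, and then upgrading $M'\le 0$ to $M'<0$ via the same contradiction argument you describe. If anything you are slightly more careful than the paper, which does not spell out the separate arguments for the strict inequalities $M>0$ and $M''>0$ or the verification of the Hardy--Ingham--P\'olya hypotheses on compact strips.
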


From this point of our paper through the its end $p$ is such that $2\le p <+\infty$ and $H^p$ is the Hardy space of holomorphic functions on the upper half plane,
$f\neq 0$, i.e. $f$ is not the zero function and the integral mean 
\[
M(y)=\int_{-\infty}^{+\infty} |f(x+iy)|^p dx
\]
is defined for $y\in[0;+\infty)$.

\begin{theorem}\label{th:twelve} Let $2\le p <+\infty$,  $f\in H^p\setminus\{0\}$. Assume the functions $\varphi:(0;+\infty)\to (-\infty; +\infty)$ and $h:(0;+\infty)\to (-\infty; +\infty)$  meet the conditions 
\begin{enumerate}\renewcommand{\labelenumi}{({\itshape \roman{enumi}}\,)~}
\item $\varphi \in \smooth{3}{0;+\infty}$, $\varphi(y)=\int_1^y \varphi'(t) dt$
for all $y\in (0;+\infty)$
\item $\varphi'>0$ and 
 $\varphi'^2\varphi''+\varphi \varphi' \varphi''' - 2 \varphi \varphi''^2 \le 0$ on $(0;+\infty)$, 
\item $h(y)=\int_1^y \varphi'(t) M(t) dt$ for all $y\in(0;+\infty)$
\end{enumerate}
Then $\frac{h}{\varphi}$ and  $\log \frac{h}{\varphi}$ both belong to $\smooth{2}{0;+\infty}$ and moreover
\[
\bigl(\frac{h}{\varphi} \bigr)'<0, \bigl(\log \frac{h}{\varphi} \bigr)''>0 \mbox{ on } (0;+\infty)
\]
\end{theorem}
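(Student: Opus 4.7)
The plan is to obtain the statement as an immediate consequence of Theorem~\ref{th1}, applied to the particular function $M(y)=\int_{-\infty}^{+\infty}|f(x+iy)|^p\,dx$ associated with $f \in H^p \setminus\{0\}$. The role of the present theorem is simply to package the $H^p$ setting inside the abstract framework of Theorem~\ref{th1}, so the main work consists of checking that all four hypotheses of that theorem are in force.

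First I would invoke Lemma~\ref{lemma:integralMean}, which collects exactly the properties of the integral mean that are needed: under the standing assumption $2\le p<+\infty$ and $f\in H^p\setminus\{0\}$, it yields $M\in\smooth{2}{0;+\infty}$ together with $M>0$, $M'<0$, and $(\log M)''\ge 0$ on $(0;+\infty)$. Thus $M$ qualifies as a function $(0;+\infty)\to (0;+\infty)$ in $\smooth{2}{0;+\infty}$, and condition (\emph{i}) of Theorem~\ref{th1} is verified.

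Next I would match the remaining conditions term by term. Hypothesis (\emph{i}) of the present theorem states that $\varphi\in\smooth{3}{0;+\infty}$ with the normalization $\varphi(y)=\int_1^y\varphi'(t)\,dt$, and hypothesis (\emph{ii}) states that $\varphi'>0$; together these are precisely condition (\emph{ii}) of Theorem~\ref{th1}. The inequality $\varphi'^2\varphi''+\varphi\varphi'\varphi'''-2\varphi\varphi''^2\le 0$ is identical to condition (\emph{iii}) of Theorem~\ref{th1}. Finally, hypothesis (\emph{iii}) here defines $h(y)=\int_1^y\varphi'(t)M(t)\,dt$, which is condition (\emph{iv}) of Theorem~\ref{th1}.

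With all four hypotheses of Theorem~\ref{th1} in place, its conclusion gives directly that $\frac{h}{\varphi}$ and $\log\frac{h}{\varphi}$ belong to $\smooth{2}{0;+\infty}$ and that $\bigl(\frac{h}{\varphi}\bigr)'<0$ and $\bigl(\log\frac{h}{\varphi}\bigr)''>0$ on $(0;+\infty)$, which is the desired conclusion. There is essentially no obstacle: the technical difficulty has already been absorbed into the case analysis on the sign of $A$ performed in the proof of Theorem~\ref{th1}, while the needed analytic input about $H^p$ integral means is furnished by Lemma~\ref{lemma:integralMean}. The only thing worth emphasizing in the write-up is that the assumption $f\neq 0$ is genuinely used, since it is what guarantees the strict inequality $M'<0$ rather than merely $M'\le 0$.
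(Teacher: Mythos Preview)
Your proposal is correct and matches the paper's approach exactly: the paper states that Theorem~\ref{th:twelve} is a simple corollary of Lemma~\ref{lemma:integralMean} and Theorem~\ref{th1} and omits the details. Your write-up supplies precisely those omitted details, verifying the hypotheses of Theorem~\ref{th1} via Lemma~\ref{lemma:integralMean}, and your remark about the role of $f\neq 0$ in securing the strict inequality $M'<0$ is apt.
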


This theorem is a simple corollary of Lemma~\ref{lemma:integralMean} and Theorem~\ref{th1} and we omit the details. Theorem~\ref{th:twelve} holds with each one of the following functions
\begin{itemize}
\item $\varphi(y)=\int_1^y t^{-a} dt$, $a>0$.
\item $\varphi(y)=\int_1^y e^{-t} dt$.
\end{itemize} 

\begin{theorem}\label{th:thirteen} Let $2\le p <+\infty$,  $f\in H^p\setminus\{0\}$. Assume the functions $\varphi:(0;+\infty)\to (-\infty; +\infty)$ and $h:(0;+\infty)\to (-\infty; +\infty)$  meet the conditions 
\begin{itemize}
\item[({\itshape {i}})] $\varphi(y)=\int_0^y \varphi'(t) dt$, $\varphi'>0$
for all $y\in (0;+\infty)$
\item[({\itshape {ii}})] $\lim\limits_{y\to 0^{+}}\frac{\varphi''(y) \varphi(y)}{\varphi'^2(y)}<1$,
 $\varphi'^2\varphi''+\varphi \varphi' \varphi''' - 2 \varphi \varphi''^2 \le 0$ on $(0;+\infty)$, \\ \ \\
 or as an alternative
\item[({\itshape {ii}'}\/)]  $\lim\limits_{y\to 0^{+}}\frac{\varphi''(y) \varphi(y)}{\varphi'^2(y)}=1$,
 $\varphi'^2\varphi''+\varphi \varphi' \varphi''' - 2 \varphi \varphi''^2 < 0$ on $(0;+\infty)$, 
\item[({\itshape {iii}})] $h(y)=\int_0^y \varphi'(t) M(t) dt$ for all $y\in(0;+\infty)$
\end{itemize}
Then $\frac{h}{\varphi}$ and  $\log \frac{h}{\varphi}$ both belong to $\smooth{2}{0;+\infty}$ and moreover
\[
\bigl(\frac{h}{\varphi} \bigr)'<0, \bigl(\log \frac{h}{\varphi} \bigr)''>0 \mbox{ on } (0;+\infty)
\]
\end{theorem}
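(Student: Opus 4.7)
The plan is to split the proof along the two alternative hypotheses \emph{(ii)} and \emph{(ii$'$)} and dispatch them separately.

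Under alternative \emph{(ii)}, I would deduce the statement directly from the combination of Lemma~\ref{lemma:integralMean} and Theorem~\ref{th2}. Indeed, for $f\in H^p$, $2\le p<+\infty$, $f\ne 0$, Lemma~\ref{lemma:integralMean} supplies $M\in\smooth{2}{0;+\infty}$ with $M>0$, $M'<0$, $(\log M)''\ge 0$ and $M(0^{+})<+\infty$; granting the standard $H^p$ boundary regularity of $M'$ at $0$ (condition \emph{(ii)} of Theorem~\ref{th2}), the remaining hypotheses of Theorem~\ref{th2} coincide literally with those of Theorem~\ref{th:thirteen} under \emph{(ii)}, so Theorem~\ref{th2} applies and gives the conclusion.

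The genuinely new case is \emph{(ii$'$)}, and for it the plan is to follow the template of Theorem~\ref{th2}'s proof while handling the boundary at $y=0$ more carefully. The first step is to establish $A<0$ on all of $(0;+\infty)$: by identity~\eqref{E1identity} the strict inequality in \emph{(ii$'$)} translates to $(A/\varphi'^2)'<0$ strictly on $(0;+\infty)$, and coupled with $\lim_{y\to 0^{+}}A/\varphi'^2=\lim_{y\to 0^{+}}(\varphi''\varphi/\varphi'^2-1)=0$ this forces $A/\varphi'^2<0$ throughout $(0;+\infty)$. With $A<0<C$ and $(AF^2-BF+C)|_{F=\varphi M}=-\varphi^3\varphi' MM'>0$, the functions $F_1$, $F_2$ are well-defined and $F_2<0<\varphi M<F_1$. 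Identity~\eqref{derivativehF1} from Lemma~\ref{derivativehF} then delivers $(h-F_1)'>0$ on $(0;+\infty)$ exactly as in Theorem~\ref{th2}, while $(h/\varphi)'<0$ follows from $h-\varphi M=-\int_0^y\varphi(t)M'(t)\,dt>0$.

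The main obstacle I anticipate is establishing the boundary condition $\lim_{y\to 0^{+}}(h-F_1)(y)=0$, which in Theorem~\ref{th2} was settled using the bound $A/\varphi'^2\le\lim_{y\to 0^{+}}A/\varphi'^2<0$; under \emph{(ii$'$)} this bound degenerates. My intended workaround is to pass to the rationalized form
\[
F_1=\frac{2C}{B+\sqrt{B^2-4AC}}=\frac{2\varphi M^2}{u+\sqrt{u^2-4\alpha M^2}},\quad \alpha=\tfrac{A}{\varphi'^2},\ u=(\alpha+1)M+\tfrac{\varphi}{\varphi'}M',
\]
and to show that the denominator stays bounded below sharply enough to beat the $\varphi$-factor in the numerator. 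The most transparent case is $u(0^{+})>0$, in which $F_1\sim \varphi M(0^{+})/u(0^{+})\to 0$. In the degenerate subcase $u(0^{+})\le 0$, I would use the bound $\sqrt{u^2-4\alpha M^2}\ge 2M\sqrt{-\alpha}$ to reduce matters to the $\varphi$-alone statement $\varphi/\sqrt{-\alpha}\to 0$, which I would try to extract by a L'H\^opital-type argument from the strict monotonicity of $\alpha$ together with $\alpha(0^{+})=0$. Once $F_1(0^{+})=0$ is in hand, the monotonicity $(h-F_1)'>0$ together with $h(0^{+})=0$ yields $h>F_1$, hence $Ah^2-Bh+C<0$, and the identity $(\log h/\varphi)''=-(Ah^2-Bh+C)/(\varphi^2 h^2)$ gives $(\log h/\varphi)''>0$.
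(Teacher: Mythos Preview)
Your plan has a genuine gap in both branches, and it is the same gap: the behaviour of $M'$ as $y\to 0^{+}$. Lemma~\ref{lemma:integralMean} only gives $M(0^{+})<+\infty$; it says nothing about $M'(0^{+})$, and for a generic $f\in H^{p}$ there is no reason for $M'(0^{+})$ to be finite. In branch \emph{(ii)} this means you cannot simply invoke Theorem~\ref{th2}: its hypothesis \emph{(ii)} (``$M'$ continuous from the right at $0$, $M'(0)\ne-\infty$'') is precisely what you are ``granting'', and it is not available. In branch \emph{(ii$'$)} the same difficulty resurfaces in disguise: your quantity $u=(\alpha+1)M+(\varphi/\varphi')M'$ involves a $0\cdot(-\infty)$ product, so $u(0^{+})$ need not exist and your case split on its sign is not well posed. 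Your fallback bound $\sqrt{u^{2}-4\alpha M^{2}}\ge 2M\sqrt{-\alpha}$ does not control the denominator $u+\sqrt{u^{2}-4\alpha M^{2}}$ when $u$ is large and negative, and the proposed L'H\^opital step for $\varphi/\sqrt{-\alpha}$ would require a lower bound on $-\alpha'=E_{1}/(\varphi^{2}\varphi'^{3})$ near $0$ that the hypotheses do not supply.

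The paper avoids this boundary issue altogether by an $\varepsilon$-shift regularisation that works uniformly for \emph{(ii)} and \emph{(ii$'$)}. After establishing $(h-F_{1})'>0$ for the original $M$, it replaces $M$ by $M_{\varepsilon}(y)=M(y+\varepsilon)$, so that $M_{\varepsilon}(0^{+})=M(\varepsilon)$ and $M'_{\varepsilon}(0^{+})=M'(\varepsilon)$ are finite; for this shifted problem the limit $F_{1,\varepsilon}(0^{+})=0$ is immediate (after rationalising, the denominator tends to $2M(\varepsilon)>0$), hence $h_{\varepsilon}-F_{1,\varepsilon}>0$. Letting $\varepsilon\to 0^{+}$ gives $h-F_{1}\ge 0$ pointwise, and the already-proved strict monotonicity $(h-F_{1})'>0$ upgrades this to $h-F_{1}>0$, from which $Ah^{2}-Bh+C<0$ and $(\log h/\varphi)''>0$ follow as in your outline. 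This regularisation is the one missing idea in your proposal.
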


\begin{proof}  
By Lemma~\ref{lemma:integralMean} $M>0$ and $M'<0$, $(\log M)''\ge 0$ on $(0;+\infty)$.

As in the proof of Theorem~\ref{th2}, by the assumptions it is clear that $\frac{h}{\varphi}$ and  $\log \frac{h}{\varphi}$ both belong to $\smooth{2}{0;+\infty}$. Furthermore,
\[
h(x)-\varphi(x) M(x) = \int_0^x (-1) \varphi(t) M'(t) dt>0 \mbox{ for all } x\in(0;+\infty).
\]
Hence, 
\[
(\tfrac{h}{\varphi})'=\tfrac{-\varphi'}{\varphi^2} (h- \varphi M) <0 \mbox{ on }
(0;+\infty).
\]

From this point of the proof of the theorem to its end let the function $q(x)=1$ for all $x\in(0;+\infty)$ and $A$, $B$, $C$ be as in Definition~\ref{defABC}. 
 
In particular, 
\begin{gather*}
A= \varphi'' \varphi -  \varphi'^2,\quad B=( \varphi' M)' \varphi^2, \quad C=\varphi^2 \varphi'^2 M^2.
\end{gather*} 
and by~\eqref{E1identity} 
\[
E_1=-\varphi^2 \varphi'^3 \bigl( \frac{\varphi'' \varphi}{ \varphi'^2} \bigr)'=-\varphi^2(\varphi'^2\varphi''+\varphi \varphi' \varphi''' - 2 \varphi \varphi''^2 ).
\]

Thus, accordingly to the assumptions of the theorem, $E_1\ge 0$ on $(0;+\infty)$. 
Furthermore,
\begin{itemize}
\item in the case the assumption ({\itshape {ii}}) 
\[
\bigl(\tfrac{A}{\varphi'^2}\bigr)'=\tfrac{\varphi \varphi' \varphi''' + \varphi'^2 \varphi'' - 2 \varphi \varphi''^2}{\varphi'^3}\le 0 \mbox{ on } (0;+\infty)
\]
So, $\tfrac{A}{\varphi'^2}=\frac{\varphi'' \varphi}{\varphi'^2}-1$ decreases on $(0;+\infty)$ and 
\[
\tfrac{A}{\varphi'^2}\le \lim_{x\to 0^{+}} \tfrac{A}{\varphi'^2}<0
\] 
\item in the case the assumption ({\itshape {ii'}\/}) 
\[
\bigl(\tfrac{A}{\varphi'^2}\bigr)'=\tfrac{\varphi \varphi' \varphi''' + \varphi'^2 \varphi'' - 2 \varphi \varphi''^2}{\varphi'^3}< 0 \mbox{ on } (0;+\infty)
\]
So, $\tfrac{A}{\varphi'^2}=\frac{\varphi'' \varphi}{\varphi'^2}-1$ decreases on $(0;+\infty)$ and 
\[
\tfrac{A}{\varphi'^2}< \lim_{x\to 0^{+}} \tfrac{A}{\varphi'^2}=0
\]
\end{itemize}
So, in particular, in both cases $A<0$ on $(0;+\infty)$.

The inequalities $A<0$, $C>0$ on $(0;+\infty)$ imply that the functions $F_{1,2}$
are well defined real valued on $(0;+\infty)$ and together with the inequality 
\[
\left. (A F^2-B F+ C)\right|_{F=\varphi M}\, = - \varphi^3 \varphi' M M' >0
\]
it follows that
\begin{equation}\label{eq:roots}
F_2<0< \varphi M< F_1 \mbox{ on } (0;+\infty).
\end{equation}

We apply the identity~\eqref{derivativehF1} from Lemma~\ref{derivativehF} to obtain
\[
\begin{split}
&(h-F_1)' A(F_2-F_1) q \varphi^2 \varphi' M\\
&\hphantom{(h-F_1} = F_1 \Bigl((F_1-\varphi M)(A^2 (F_1-\varphi M) + E_1 M)+C \varphi^2 \bigl( q\frac{M'}{M}\bigr)' \Bigr)>0
\end{split}
\]
So, 
\begin{equation}\label{eq:eps0}
(h-F_1)'>0 \mbox{ on } (0;+\infty).
\end{equation}

Let $\varepsilon>0$ and
\[
M_{\varepsilon}(y)=\int_{-\infty}^{+\infty} |f(x+(y+\varepsilon)i)|^p dx, \quad \forall y\in(0;+\infty).
\]
Thus, $M_{\varepsilon}(y)=M(y+\varepsilon)$, $M_{\varepsilon}(0)=M(\varepsilon)$,  $M'_{\varepsilon}(0)=M'(\varepsilon)$, $h_{\varepsilon}(y)=\int_0^y \varphi(t) M(t+\varepsilon) dt$ and
\[
B_{\varepsilon}(y)=(\varphi(y)M(y+\varepsilon)+\varphi'(y)M'(y+\varepsilon))\varphi^2(y)
\]
$C_{\varepsilon}=\varphi^2 \varphi'^2 M^2(y+\varepsilon)$,where $y>0$. 
As, it is above, functions 
\[
F_{1,\varepsilon}=\frac{B_{\varepsilon}-\sqrt{B^2_{\varepsilon}-4AC_{\varepsilon}}}{2A},\quad F_{2,\varepsilon}=\frac{B_{\varepsilon}+\sqrt{B^2_{\varepsilon}-4AC_{\varepsilon}}}{2A}
\]
are well defined real valued on $(0;+\infty)$ and together with the inequality 
\[
\left. (A F^2-B_{\varepsilon} F+ C_{\varepsilon})\right|_{F=\varphi M_{\varepsilon}}\, = - \varphi^3 \varphi' M_{\varepsilon} M'_{\varepsilon} >0
\]
it follows that
\[
F_{2,\varepsilon}<0< \varphi M_{\varepsilon}< F_{1,\varepsilon} \mbox{ on } (0;+\infty).
\]

We apply the identity~\eqref{derivativehF1} from Lemma~\ref{derivativehF} to obtain
\[
\begin{split}
&(h_{\varepsilon}-F_{1,\varepsilon})' A(F_{2,\varepsilon}-F_{1,\varepsilon}) q \varphi^2 \varphi' M_{\varepsilon}\\
&= F_{1,\varepsilon} \Bigl((F_{1,\varepsilon}-\varphi M_{\varepsilon})(A^2 (F_{1,\varepsilon}-\varphi M_{\varepsilon}) + E_1 M_{\varepsilon})+C \varphi^2 \bigl( q\frac{M'_{\varepsilon}}{M_{\varepsilon}}\bigr)' \Bigr)>0
\end{split}
\]
So, $(h_{\varepsilon}-F_{1,\varepsilon})'>0$ on $(0;+\infty)$.

Moreover, $\lim_{x\to 0^{+}} (h_{\varepsilon}-F_{1,\varepsilon}) = 0$ because of $\lim_{x\to0^{+}} h_{\varepsilon}=0 $ and 
\begin{align*}
&\lim_{x\to 0^{+}} F_{1,\varepsilon}(x) = \lim_{x\to 0^{+}} \frac{B_{\varepsilon}-\sqrt{B^2_{\varepsilon}-4AC_{\varepsilon}}}{2A} \\
&=\lim_{x\to 0^{+}} \varphi \,\tfrac{\bigl( (\frac{A}{\varphi'^2}+1)M_{\varepsilon} + \frac{\varphi}{\varphi'} M'_{\varepsilon}\bigr) - \sqrt{ \bigl( (\frac{A}{\varphi'^2}+1)M_{\varepsilon} + \frac{\varphi}{\varphi'} M'_{\varepsilon}\bigr)^2 - 4 \frac{A}{\varphi'^2} M^2_\varepsilon } }{ 2 \frac{A}{\varphi'^2} }=0
\end{align*}

Hence, $h_{\varepsilon}-F_{1,\varepsilon}>0$ on $(0;+\infty)$.

Fix an $y>0$. Hence,
\[
h(y)-F_1(y)=\lim_{\varepsilon\to0^{+}}(h_{\varepsilon}-F_{1,\varepsilon})\ge 0. 
\]

So, $h-F_1\ge 0$ on $(0;+\infty)$ and by~\eqref{eq:eps0} it follows that 
\[
h-F_1> 0 \mbox{ on } (0;+\infty).
\]
Therefore, by $A<0$ and~\eqref{eq:roots} it follows that $Ah^2-Bh+C<0$ on 
$(0;+\infty)$.

The second derivative 
\[
(\log \tfrac{h}{\varphi} )''=(q (\log \tfrac{h}{\varphi} )')'=
(\tfrac{q h' \varphi}{h \varphi'})'=(\tfrac{q \varphi M}{h })'=
\tfrac{(-1)(A h^2 - B h + C)}{\varphi^2 h^2} >0
\]
on $(0;+\infty)$.
\end{proof}

Theorem~\ref{th:thirteen} holds with each one of the following functions
\begin{itemize}
\item $\varphi(y)=\int_0^y t^{-a} dt$, $a<1$.
\item $\varphi(y)=\int_0^y e^{-t} dt$.
\end{itemize} 

\begin{theorem}\label{th:fourteen}
Let $2\le p <+\infty$,  $f\in H^p\setminus\{0\}$. Assume the functions $\varphi:(0;+\infty)\to (-\infty; +\infty)$ and $h:(0;+\infty)\to (-\infty; +\infty)$  meet the conditions 
\begin{enumerate}\renewcommand{\labelenumi}{({\itshape \roman{enumi}}\,)~}
\item $\varphi \in \smooth{3}{0;+\infty}$, $\varphi(y)=-\int_y^{+\infty} \varphi'(t) dt$
for all $y\in (0;+\infty)$
\item $\varphi'>0$ and 
 $\varphi'^2\varphi''+\varphi \varphi' \varphi''' - 2 \varphi \varphi''^2 \le 0$ on $(0;+\infty)$, 
\item $h(y)=-\int_y^{+\infty} \varphi'(t) M(t) dt$ for all $y\in(0;+\infty)$
\end{enumerate}
Then $\frac{h}{\varphi}$ and  $\log \frac{h}{\varphi}$ both belong to $\smooth{2}{0;+\infty}$ and moreover
\[
\bigl(\frac{h}{\varphi} \bigr)'<0, \bigl(\log \frac{h}{\varphi} \bigr)''>0 \mbox{ on } (0;+\infty)
\]
\end{theorem}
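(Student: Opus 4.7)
The plan is to obtain Theorem~\ref{th:fourteen} as a direct corollary of Theorem~\ref{th3} combined with Lemma~\ref{lemma:integralMean}, in the same spirit as the brief derivation of Theorem~\ref{th:twelve} from Theorem~\ref{th1}. No genuinely new analytical work seems required, because the hypothesis list of Theorem~\ref{th:fourteen} has been arranged to match the hypothesis list of Theorem~\ref{th3} once the specific $M$ coming from $f\in H^p$ is recognized.

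First I would apply Lemma~\ref{lemma:integralMean} to the integral mean $M(y)=\int_{-\infty}^{+\infty}|f(x+iy)|^p\,dx$ of the fixed $f\in H^p\setminus\{0\}$ with $2\le p<+\infty$. This produces $M\in\smooth{2}{0;+\infty}$, together with the pointwise inequalities $M>0$, $M'<0$, $M''>0$, and $(\log M)''\ge 0$ on $(0;+\infty)$. These are precisely the regularity and monotonicity requirements that hypothesis ({\itshape i}\,) of Theorem~\ref{th3} imposes on its abstract function $M$.

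Next I would verify the remaining hypotheses of Theorem~\ref{th3}. Since $\varphi'>0$ and $\varphi(y)=-\int_y^{+\infty}\varphi'(t)\,dt$, the value $\varphi(y)$ is strictly negative for every $y\in(0;+\infty)$, and moreover $\varphi(y)\to 0$ as $y\to+\infty$; therefore $\varphi$ actually maps $(0;+\infty)$ into $(-\infty;0)$, matching the codomain used in Theorem~\ref{th3}. The same reasoning, combined with boundedness of $M$ from Lemma~\ref{lemma:integralMean}, shows that the integral defining $h$ converges and gives $h<0$ on $(0;+\infty)$ together with $h(y)\to 0$ as $y\to+\infty$. Hypotheses ({\itshape ii}\,), ({\itshape iii}\,), ({\itshape iv}\,) of Theorem~\ref{th3} then coincide with hypotheses ({\itshape i}\,), ({\itshape ii}\,), ({\itshape iii}\,) of the present theorem.

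Thus Theorem~\ref{th3} applies directly, and its conclusion — $h/\varphi$ and $\log(h/\varphi)$ both lie in $\smooth{2}{0;+\infty}$, $(h/\varphi)'<0$, and $(\log(h/\varphi))''>0$ on $(0;+\infty)$ — is exactly the conclusion of Theorem~\ref{th:fourteen}. Unlike in the derivation of Theorem~\ref{th:thirteen}, where the behavior of $M$ at $0^+$ is not controlled by standard $H^p$ theory and the author was forced to introduce the shifted means $M_\varepsilon(y)=M(y+\varepsilon)$ and pass to a limit, here no such device is necessary: Theorem~\ref{th3} places no boundary requirement on $M$ at $0^+$ or at $+\infty$ beyond what Lemma~\ref{lemma:integralMean} automatically supplies. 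The only place one should double-check is the harmless observation that $\lim_{y\to+\infty}\varphi(y)M(y)=0$ and $\lim_{y\to+\infty}h(y)=0$, both of which follow from $\varphi'\in L^1(y;+\infty)$ and the boundedness of $M$ on $[0;+\infty)$; these are the only limits used in Cases 1--3 of the proof of Theorem~\ref{th3}. Consequently the details of this corollary can legitimately be omitted, exactly as they were for Theorem~\ref{th:twelve}.
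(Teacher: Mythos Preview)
Your proposal is correct and matches the paper's own approach exactly: the paper states that Theorem~\ref{th:fourteen} ``is a simple corollary of Lemma~\ref{lemma:integralMean} and Theorem~\ref{th3} and we omit the details.'' Your additional remarks verifying the codomains of $\varphi$ and $h$, the convergence of the defining integrals, and the limits at $+\infty$ simply make explicit what the paper leaves implicit.
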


This theorem is a simple corollary of Lemma~\ref{lemma:integralMean} and Theorem~\ref{th3} and we omit the details. Theorem~\ref{th:fourteen} holds with each one of the following functions
\begin{itemize}
\item $\varphi(y)=-\int_y^{+\infty} t^{-a} dt$, $a>1$.
\item $\varphi(y)=-\int_y^{+\infty} t^{-a}e^{-t} dt$, $a<0$.
\end{itemize}

\begin{theorem}\label{th:fifteen}
Let $2\le p <+\infty$,  $f\in H^p\setminus\{0\}$, 
$\varphi(y)=-\int_y^{+\infty} e^{-t} dt$, $h(x)=-\int_y^{+\infty} e^{-t} M(t) dt$
for all $y\in (0;+\infty)$.
Then $\frac{h}{\varphi}$ and  $\log \frac{h}{\varphi}$ both belong to $\smooth{2}{0;+\infty}$ and moreover
\[
\bigl(\frac{h}{\varphi} \bigr)'<0, \bigl(\log \frac{h}{\varphi} \bigr)''>0 \mbox{ on } (0;+\infty)
\]
\end{theorem}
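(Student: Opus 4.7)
The obstacle that distinguishes this theorem from Theorem~\ref{th:fourteen} is displayed in Example~\ref{exa:weights}(6): for the weight $\varphi(y)=-\int_y^{+\infty}e^{-t}dt=-e^{-y}$ one has $A\equiv 0$ on $(0;+\infty)$, so the functions $F_{1,2}$ of Definition~\ref{defABC} are undefined and the proof pattern of Theorem~\ref{th3} (which is driven by Lemma~\ref{derivativehF}) breaks down. The plan is therefore to replace Lemma~\ref{derivativehF} by Lemma~\ref{derivativehBC}, which is tailor-made for the case $A=0$, while keeping the rest of the structure of Theorem~\ref{th3}.

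The routine part is identical to Theorem~\ref{th3}. Setting $q\equiv 1$, Lemma~\ref{lemma:integralMean} gives $M>0$, $M'<0$, $(\log M)''\ge 0$ on $(0;+\infty)$; since $\varphi<0$ and $h=-\int_y^{+\infty}e^{-t}M(t)dt<0$, the quotients $h/\varphi$ and $\log(h/\varphi)$ both lie in $\smooth{2}{0;+\infty}$. The derivative $(h/\varphi)'=-\varphi'(h-\varphi M)/\varphi^2$ is negative because $h-\varphi M=-\int_y^{+\infty}\varphi(t)M'(t)dt>0$ (note that $\varphi<0$ and $M'<0$), exactly as in Theorem~\ref{th3}.

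For the log-convexity I would use the formula
\[
\bigl(\log\tfrac{h}{\varphi}\bigr)''=\bigl(\tfrac{\varphi M}{h}\bigr)'=\tfrac{-(Ah^2-Bh+C)}{\varphi^2 h^2}
\]
already derived in the earlier theorems. A direct computation with our specific $\varphi$ yields $A=0$, $B=e^{-3y}(M'-M)<0$ and $C=e^{-4y}M^2>0$, so the formula simplifies to $B(h-C/B)/(\varphi^2 h^2)$, and since $B<0$ it suffices to establish the inequality $h<C/B$ on $(0;+\infty)$. At this point Lemma~\ref{derivativehBC} applies on $J=(0;+\infty)$ (its hypotheses $A=0$, $\varphi'=e^{-y}>0$, $B\neq 0$, $h'=\varphi'M$ are all immediate) and yields
\[
(h-C/B)'\,B^2\,\varphi'\,M=C^2(\log M)''\ge 0,
\]
so $h-C/B$ is non-decreasing on $(0;+\infty)$. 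Combined with the limits $|h(y)|\le M(0)e^{-y}\to 0$ and $|C/B|=e^{-y}M^2/(M-M')\le e^{-y}M(0)\to 0$ as $y\to+\infty$, this gives $h-C/B\le 0$ on $(0;+\infty)$.

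The hard part will be upgrading this non-strict inequality to the strict one $h-C/B<0$ required for strict log-convexity. The natural rigidity argument runs as follows: if $(h-C/B)(y_0)=0$ for some $y_0>0$, then the monotonicity together with the vanishing limit at $+\infty$ forces $h\equiv C/B$ on $[y_0;+\infty)$, hence $(\log M)''\equiv 0$ there, hence $M(y)=M_0 e^{cy}$ on $[y_0;+\infty)$ with $c<0$. For $p=2$ the Plancherel identity $M(y)=2\pi\int_0^{+\infty}|\hat f(\xi)|^2e^{-2y\xi}d\xi$ and the real-analyticity of Laplace transforms would then force $|\hat f|^2$ to be a point mass, which is impossible for $\hat f\in L^2$; for $2<p<+\infty$ the Hardy--Ingham--P\'olya analysis of equality in $M'^2\le MM''$ (which corresponds to a degenerate separated-variable form of $|f(x+iy)|^p$, incompatible with $f\in H^p\setminus\{0\}$) should supply the same contradiction. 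This rigidity step is the one I expect to require the most care.
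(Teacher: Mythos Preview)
Your plan coincides with the paper's through the application of Lemma~\ref{derivativehBC}: both show $(h-C/B)'\ge 0$, compute $\lim_{y\to+\infty}(h-C/B)=0$, and reduce strict log-convexity to ruling out $h(y_0)=C(y_0)/B(y_0)$ at some finite $y_0$. The divergence is only in this final rigidity step. The paper, having obtained $(M'/M)'\equiv 0$ on $(y_0;+\infty)$, asserts that the relation $h'=(C/B)'$ then forces $M'/M=1$, so that $M(y)={\mathrm{const}}\cdot e^y$ contradicts $M'\le 0$. A direct check shows, however, that whenever $M'/M=c$ is constant one has $C/B=e^{-y}M/(c-1)$ and hence $(C/B)'=e^{-y}M=h'$ for \emph{every} value of $c<1$; the paper's extraction of $c=1$ does not follow from what is written. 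Your route via the structure of $H^p$ is therefore not an unnecessary complication but the actual content needed at this step. The $p>2$ case you flagged as delicate is in fact clean: equality in the Cauchy--Schwarz inequality behind $M'^2\le MM''$ forces $\partial_y f=\mu(y)f$ pointwise in $x$ for each $y>y_0$; since $\partial_y f=if'$ and $f'/f$ is holomorphic where $f\ne 0$, this makes $f'/f$ constant on $\{\im z>y_0\}$ and hence (by the identity principle applied to $f'+i\mu_0 f$) on the whole half-plane, giving $f(z)=Ce^{\alpha z}$, which lies in no $H^p\setminus\{0\}$.
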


\begin{proof} 
By Lemma~\ref{lemma:integralMean}, $M\in \smooth{2}{0;+\infty}$, 
\[
M>0,\quad M'<0,\quad M''> 0, \quad (\log M)''\ge 0 \mbox{ on } (0;+\infty).
\]

By the assumptions it is clear that $\frac{h}{\varphi}$ and  $\log \frac{h}{\varphi}$ both belong to $\smooth{2}{0;+\infty}$. 

{\itshape {Claim.}\/} $h-\varphi M>0$ on $(0;+\infty)$.
Indeed, by the assumptions of the theorem 
\begin{itemize}
\item $(h-\varphi M)'=- \varphi M' <0$ on $(0;+\infty)$
\item $M$ decreases on $(0;+\infty)$ and $M>0$. So, the limit \\ $\lim_{y\to{+\infty}} M(x)$ exists and it is a non-negative number.
\end{itemize}
Therefore,  $h-\varphi M$ decreases on $(0;+\infty)$ and 
\[
\lim_{y\to +\infty} (h(y)-\varphi(y) M(y)) = 0.
\]
Hence, $h-\varphi M>0$ on $(0;+\infty)$.

It follows from this Claim that the derivative
\[
(\tfrac{h}{\varphi})'=\tfrac{-\varphi'}{\varphi^2} (h- \varphi M) <0 \mbox{ on }
(0;+\infty).
\]
 
From this point of the proof of the theorem to its end let the function $q(y)=1$ for all $y\in(0;+\infty)$ and $A$, $B$, $C$ be as in Definition~\ref{defABC}. 
 
In particular, 
\begin{gather*}
A= \varphi'' \varphi -  \varphi'^2=0,\quad B=(\varphi'' M + \varphi' M') \varphi^2<0, \quad C=\varphi^2 \varphi'^2 M^2.
\end{gather*} 

By Lemma~\ref{derivativehBC}
\[
\bigl(h-\frac{C}{B}\bigr)'\, B^2 q \varphi' M = C^2 \bigl( q\frac{M'}{M}\bigr)'\ge 0 \mbox{ on } (0;+\infty).
\]
So, $(h-\tfrac{C}{B})'\ge 0$ on $(0;+\infty)$
Therefore, $h-\tfrac{C}{B}$ increases on $(0;+\infty)$ and
\[
\begin{split}
&\lim_{y\to+\infty}(h(y)-\tfrac{C(y)}{B(y)})=
\lim_{y\to+\infty}h(y)-
\lim_{y\to+\infty}\tfrac{C(y)}{B(y)}\\
&=\lim_{y\to+\infty} \tfrac{\varphi^2(y)\varphi'^2(y) M^2(y)}{(\varphi''(y)M(y)+\varphi'(y)M'(y))\varphi^2(y)}=
\lim_{y\to+\infty} \tfrac{\varphi'(y) M(y)}{\frac{\varphi''(y)}{\varphi'(y)}+\frac{M'(y)}{M(y)}}=0
\end{split}
\]
because in the last equation the numerator tends to $0$ and the denominator
tends to sum of $(-1)$ and a non-positive number.

{\itshape {Claim.}\/} $h-\tfrac{C}{B}<0$ on $(0;+\infty)$.
 
Indeed, if there is $y_0\in(0;+\infty)$ such that  $h(y_0)-\tfrac{C(y_0)}{B(y_0)}=0$ then $h-\tfrac{C}{B}=0$ on $(y_0;+\infty)$. $(\tfrac{M'}{M})'=0$
on $(y_0;+\infty)$ which means that $\tfrac{M'}{M}$ is a non zero constant on $(y_0;+\infty)$ (because of $f$ is not the zero function). The equation $h'=(\tfrac{C}{B})'$ then gives us $\tfrac{M'}{M}=1$ on $(y_0;+\infty)$.
Therefore, $M(y)=e^y.{\mathrm {const}}$ and by $M> 0$, $M'\le 0$ it follows that $M=0$  on $(y_0;+\infty)$ which is impossible because of $f$ is not the zero function.

Now, the second derivative
\[
(\log \tfrac{h}{\varphi})''=\tfrac{B(h-\frac{C}{B})}{h^2 \varphi^2}>0 \mbox{ on }
(0;+\infty). 
\]
\end{proof}

Note that in some specific cases it seems reasonable to change parts of the proofs with an argument for $-Bh+C<0$. In particular, such an approach will make us to use part of the proof of Lemma 7. However, we prefer not to do this. 

\begin{example}[An auxiliary example]
If 
\[
\varphi(y)=-\int_y^{+\infty} e^{t-e^{t}} dt, M(y)=e^{y^2}, h(y)=-\int_y^{+\infty} \varphi'(t) M(t) dt
\]
then $(\tfrac{h}{\varphi})'>0$ and $(\log \tfrac{h}{\varphi})'' >0$ on  $(0;+\infty)$.
\end{example}

\end{document}